\numberwithin{equation}{subsection}
\theoremstyle{plain}
\newtheorem{lem}{Lemma}[subsection]
\newtheorem{prop}[lem]{Proposition}
\newtheorem{thm}[lem]{Theorem}
\newtheorem*{lem*}{Lemma}
\newtheorem*{Acknowledgements*}{Acknowledgements}
\newtheorem*{prop*}{Proposition}
\newtheorem*{thm*}{Theorem}
\newtheorem*{cor*}{Corollary}
\newtheorem*{conj*}{Conjecture}
\theoremstyle{remark}
\newtheorem{rmk}[lem]{Remark}
\newtheorem{defn}[lem]{Definition}
\newtheorem{eg}[lem]{Example}
\newtheorem{rem}[lem]{Remark}
\newcommand{\Hom}{\operatorname{Hom}}
\newcommand{\cha}{\operatorname{cha}}
\newcommand{\B}{{B_{m,p,n}	}}
\newcommand{\Bone}{{B_{m,1,n}}}
\newcommand{\ZZ}{{\mathbb Z}}
\newcommand{\NN}{{\mathbb N}}
\newcommand{\hcf}{\operatorname{hcf}}
\newcommand{\Stab}{\operatorname{Stab}}
\newcommand{\ft}{{\mathfrak{t}}}
\begin{document}
\title[Brauer
  algebras of type $G${\protect\lowercase{$(m,p,n)$}}]{Decomposition
  numbers for Brauer  algebras of type
  $G${\protect\lowercase{$(m,p,n)$}} in characteristic zero}  
\author{C.~Bowman}
\email{Bowman@math.jussieu.fr}
\address{Institut de Math\'ematiques de Jussieu, 175 rue du chevaleret,
  75013, Paris} 
\author{A.~G.~Cox} 
\email{A.G.Cox@city.ac.uk} 
%\author{M.~De Visscher}
%\email{Maud.Devisscher.1@city.ac.uk }
\address{Centre for Mathematical Science,
 City University London,
 Northampton Square,
 London,
 EC1V 0HB,
 England.}
\subjclass[2000]{20C30} \date{\today}

 \maketitle
 \begin{abstract}
We introduce Brauer algebras associated to complex reflection groups
of type $G(m,p,n)$, and study their representation theory via Clifford
theory.  In particular, we determine the 
decomposition numbers of these algebras in characteristic zero.
\end{abstract}
 
 \section*{Introduction}
 
The symmetric and general linear groups satisfy a double centraliser
property over tensor space.  This relationship is known as Schur--Weyl
duality and allows one to pass information between the representation
theories of these algebras. The Brauer algebra was defined to play the
role of the symmetric group algebra in a Schur--Weyl duality with the
orthogonal (or symplectic) group.

The original definition of the Brauer algebra has been generalised in
many directions (see for example \cite{birwen, hoBMW, cfw, cly,
  turwall, koikewall}).  In
this paper we regard the classical Brauer algebra, $B_n(\delta)$ as an
enlargement of the symmetric group algebra; in other words it
corresponds to an enlargement of a complex reflection group of type
$G(1,1,n)$. By considering analogous enlargements of other complex
reflection groups, we arrive at the Brauer algebras of type
$G(m,p,n)$.  

The type $G(m,1,n)$ case was studied in \cite{bcd}, where the
decomposition numbers for these algebras are calculated by a reduction
to the type $G(1,1,n)$ case.  In this paper we study the Brauer
algebras of type $G(m,p,n)$.  Using a combination of diagram algebra
techniques, Clifford theory, and Brauer--Humphreys reciprocity, we
calculate the decomposition numbers of these algebras.
 
 We begin in Section 1 by defining the Brauer algebras, $B_{m,p,n}$, of type $G(m,p,n)$ and realise the algebra of type $G(m,1,n)$ as a skew group algebra.  This will allow us to apply the methods of Clifford theory.
We then review the basic representation theory of complex reflection groups which is both required for and motivates the results that follow.  
  
 In Section 3 we begin to study the representation theory of the Brauer algebras of type $G(m,p,n)$.  We deduce when the algebra is quasi-hereditary and give explicit constructions of the standard modules.  We then apply Clifford theory to deduce restriction rules for standard, simple, and projective modules.  We briefly consider restriction to the underlying group algebra using Littlewood--Richardson theory.  
 
Using Clifford theory and the fact that {\rm Hom}-spaces for $B_{m,1,n}$ have nice rotational symmetries, we are able to decompose Hom-spaces for $B_{m,p,n}$.  Combining these results and Brauer--Humphreys' reciprocity, we conclude by determining the decomposition numbers of $B_{m,p,n}$ in terms of those for the classical Brauer algebra (which have been given in terms of Kazhdan--Lusztig polynomials by \cite{marbrauer}).
 
% 
% \begin{Acknowledgements*}
% We would like to thank Maud De Visscher for %valuable input at the beginning of this project.
% help identifying the family of Brauer algebras studied in this paper.
% We would also like to thank Jean Michel and Shona Yu for helpful discussions
% concerning complex reflection groups   %and the organisers of
% and the organisers of   `Lie theory and quantum analogues' and the CIRM institute for   providing support and a stimulating environment during the  conference.  The first author w is grateful for the financial support received from the ANR grant ANR-10-BLAN-0110.
% \end{Acknowledgements*}

 \section{ Brauer algebras of type $G(m,p,n)$}

We fix $k$, an algebraically closed field. %and fix $\xi \in k^{\times}$
%to be a primitive $m$th root of unity.  
Let $m,p,n \in \mathbb{N}$ be
such that $pd=m$ for some $d\in\NN$.  In this section we will define
the Brauer algebras, $\B$, of type $G(m,p,n)$.  We shall show that the
Brauer algebra of type $G(m,p,n)$ is a subalgebra of that of type
$G(m,1,n)$ introduced in \cite[Appendix]{bcd} (where it was called the
unoriented cyclotomic Brauer algebra).
 
\subsection{Definitions} 
Given $n \in \mathbb{N}$ and $\delta=(\delta_0,\delta_p,
\delta_{2p},\ldots,\delta_{(d-1)p}) \in k^{d }$, the {\it Brauer algebra
  of type $G(m,p,n)$}, denoted by $\B$, is a finite dimensional
associative $k$-algebra generated by certain Brauer diagrams.  A {\it
  diagram} consists of a {\it frame} with $n$ distinguished points on
the northern and southern boundaries, which we call {\it nodes}.  Each
node is joined to precisely one other by a strand; strands connecting
the northern and southern edge will be called \emph{through-strands}
and the remainder (\emph{northern or southern}) \emph{arcs}.  There may also be closed loops inside
the frame, those diagrams without closed loops are called
\emph{reduced} diagrams.

Each strand is labelled by an element of the cyclic group $\ZZ/m\ZZ$;
we require the additional restriction that the total sum over the
labels is a multiple of $p$.  When drawing diagrams we will adopt the
convention that unlabelled arcs have label $0$. Two diagrams are
equivalent if the 
strands connect the same pairs of nodes and have the same labels.  As
a vector space, $\B$ is the $k$-span of the reduced diagrams.  Figure
\ref{xyelts} gives an example of two such elements in $B(6,3,6)$.

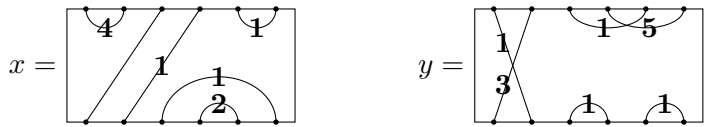
\begin{figure}[ht]
\centerline{
$x=$
\begin{minipage}{45mm}\begin{tikzpicture}[scale=0.5]
  \draw (0,0) rectangle (6,3);
  \foreach \x in {0.5,1.5,...,5.5}
    {\fill (\x,3) circle (2pt);
     \fill (\x,0) circle (2pt);}
  \begin{scope} 
     \draw (2.5,3) -- (0.5,0);     \draw (3.5,3) -- (1.5,0);
%%%%arcs
    \draw (0.5,3) arc (180:360:0.5 and 0.5);
    \draw (2.5,0) arc (180:360:1.5 and -1.2);    \draw (3.5,0) arc (180:360:0.5 and -0.5);    \draw (4.5,3) arc (180:360:0.5 and 0.5);
                                      \draw (1,2.5) node {$ \textbf{4}$};\draw (2.5,1.5) node {$ \textbf{1}$};\draw (4,0.5) node {$ \textbf{2}$};\draw (4,1.2) node {$ \textbf{1}$};\draw (5,2.5) node {$ \textbf{1}$};
  \end{scope}
\end{tikzpicture}\end{minipage}
$y=$
\begin{minipage}{45mm}
\begin{tikzpicture}[scale=0.5]
  \draw (0,0) rectangle (6,3);
  \foreach \x in {0.5,1.5,...,5.5}
    {\fill (\x,3) circle (2pt);
     \fill (\x,0) circle (2pt);}
  \begin{scope} 
     \draw (0.5,3) -- (1.5,0);     \draw (1.5,3) -- (0.5,0);
     \draw (2.5,0) arc (180:360:0.5 and -0.5);  \draw (3.5,3) arc (180:360:1 and 0.5);\draw (2.5,3) arc (180:360:1 and 0.5); \draw (4.5,0) arc (180:360:0.5 and -0.5);
        \draw (3,0.5) node {$ \textbf{1}$};        \draw (5,0.5) node {$ \textbf{1}$};          \draw (4.6,2.5) node {$ \textbf{5}$};    \draw (3.4,2.5) node {$ \textbf{1}$};          \draw (0.75,2.1) node {$ \textbf{1}$};   \draw (0.75,1) node {$ \textbf{3}$};    
   \end{scope}
\end{tikzpicture}\end{minipage}}
 \caption{Two elements in $B_{6,3,6}(\delta)$}
\label{xyelts}
\end{figure}

Given $x,y \in B_{m,p,n}$, we define the product $x \cdot y$ to be the
diagram obtained by concatenation of $x$ above $y$, where we identify
the southern nodes of $x$ with the northern nodes of $y$ and then
ignore the section of the frame common to both diagrams.  

The label of each strand, $s$, in the concatenated diagram, is then
the sum of the labels of the strands it is composed from.  The product of two
diagrams may contain a closed loop: if this loop is labelled by
$ip\in\ZZ/m\ZZ$ then the diagram is set equal to $\delta_{ip}$ times the
same diagram with the loop removed; if the label is not divisible by
$p$, we set the product to be zero.

\begin{eg}
The product $x \cdot y$  of the elements in Figure \ref{xyelts} is
given in Figure \ref{2}.  The product 
$y^2=0$ as it results in the removal of a closed loop labelled by $2$
(when reduced mod $6$), which is not divisible by 3.

\begin{figure}[ht]
\centerline{
\begin{minipage}{37mm}\begin{tikzpicture}[scale=0.5]
  \draw (0,0) rectangle (6,3);
  \foreach \x in {0.5,1.5,...,5.5}
    {\fill (\x,3) circle (2pt);
     \fill (\x,0) circle (2pt);}
  \begin{scope} 
     \draw (2.5,3) -- (0.5,0);     \draw (3.5,3) -- (1.5,0);
     \draw (0.5,3) arc (180:360:0.5 and 0.5);
    \draw (2.5,0) arc (180:360:1.5 and -1.2);    \draw (3.5,0) arc (180:360:0.5 and -0.5);    \draw (4.5,3) arc (180:360:0.5 and 0.5);
                                      \draw (1,2.5) node {$ \textbf{4}$};\draw (2.5,1.5) node {$ \textbf{1}$};\draw (4,0.5) node {$ \textbf{2}$};\draw (4,1.2) node {$ \textbf{1}$};\draw (5,2.5) node {$ \textbf{1}$};
 \end{scope}
   \draw (0,0) rectangle (6,-3);
  \foreach \x in {0.5,1.5,...,5.5}
    {\fill (\x,-3) circle (2pt);
     \fill (\x,0) circle (2pt);}
  \begin{scope} 
     \draw (0.5,0) -- (1.5,-3);     \draw (1.5,0) -- (0.5,-3);
     \draw (2.5,-3) arc (180:360:0.5 and -0.5);  \draw (3.5,0) arc (180:360:1 and 0.5);\draw (2.5,0) arc (180:360:1 and 0.5); \draw (4.5,-3) arc (180:360:0.5 and -0.5);
        \draw (3,-2.5) node {$ \textbf{1}$};        \draw (5,-2.5) node {$ \textbf{1}$};          \draw (4.6,-.5) node {$ \textbf{5}$};    \draw (3.4,-.5) node {$ \textbf{1}$};           \draw (0.75,-0.9) node {$ \textbf{1}$};   \draw (0.75,-2) node {$ \textbf{3}$};    
  \end{scope}
\end{tikzpicture}\end{minipage}
= \  $\delta_3$
\begin{minipage}{33mm}\begin{tikzpicture}[scale=0.5]
   \draw (0,0) rectangle (6,3);
  \foreach \x in {0.5,1.5,...,5.5}
    {\fill (\x,3) circle (2pt);
     \fill (\x,0) circle (2pt);}
  \begin{scope} 
     \draw (2.5,3) -- (1.5,0);     \draw (3.5,3) -- (0.5,0);
     \draw (0.5,3) arc (180:360:0.5 and 0.5);
    \draw (4.5,0) arc (180:360:0.5 and -0.5);    \draw (3.5,0) arc (180:360:-0.5 and -0.5);    \draw (4.5,3) arc (180:360:0.5 and 0.5);
   \draw (2.2,2.3) node {$ \textbf{1}$};                                      \draw (1,2.5) node {$ \textbf{4}$};\draw (1.2,0.8) node {$ \textbf{4}$};\draw (3,0.5) node {$ \textbf{1}$};\draw (5,0.5) node {$ \textbf{1}$};\draw (5,2.5) node {$ \textbf{1}$};
 \end{scope}
 \end{tikzpicture}\end{minipage}
 } \caption{The product  $x \cdot y$  }
\label{2}
\end{figure}
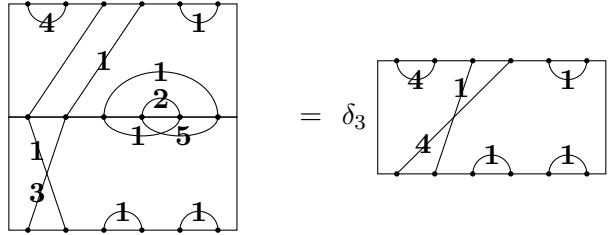

\end{eg}

We will need to speak of certain elements of the algebra with great
frequency. The elements  $s_{i,j}$, $t_i^k$,  $s_{i,j}^\ast$, and $e_{i,j}$ (for $i,j\leq
n$) are indicated in Figure \ref{tare}, where the nodes are numbered
in increasing order from left to right by $1$ up to $n$ on the
northern edge, and $\bar{1}$ up to $\bar{n}$ on the southern edge.
 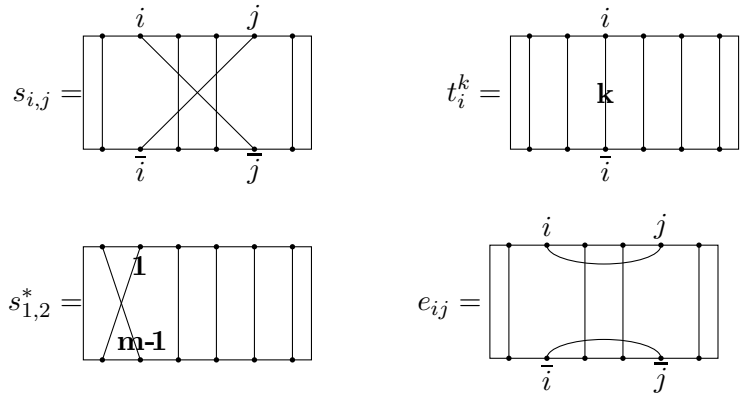
\begin{figure}[ht]
 \begin{align*}
s_{i,j}=&\begin{minipage}{44mm}\begin{tikzpicture}[scale=0.5]
  \draw (0,0) rectangle (6,3);
  \foreach \x in {0.5,1.5,...,5.5}
    {\fill (\x,3) circle (2pt);
     \fill (\x,0) circle (2pt);}
    \draw (4.5,3.5) node {$j$};
    \draw (4.5,-0.5) node {$\overline{j}$};
    \draw (1.5,3.5) node {$i$};
    \draw (1.5,-0.5) node {$\overline{i}$};
  \begin{scope}%[thick]
    \draw (0.5,3) -- (0.5,0);
    \draw (5.5,3) -- (5.5,0);
        \draw (1.5,3) -- (4.5,0);
            \draw (4.5,3) -- (1.5,0);
    \draw (2.5,3) -- (2.5,0);
    \draw (3.5,3) -- (3.5,0);
   \end{scope}
\end{tikzpicture}\end{minipage} \quad
%%%
t_{i}^k=\begin{minipage}{34mm}\begin{tikzpicture}[scale=0.5]
  \draw (0,0) rectangle (6,3);
  \foreach \x in {0.5,1.5,...,5.5}
    {\fill (\x,3) circle (2pt);
     \fill (\x,0) circle (2pt);}
       \draw (2.5,3.5) node {$i$};
              \draw (2.5,-.5) node {$\overline{i}$};
                    \draw (2.5,1.5) node {${\textbf{k}}$};
  \begin{scope} 
    \draw (0.5,3) -- (0.5,0);
    \draw (5.5,3) -- (5.5,0);
        \draw (1.5,3) -- (1.5,0);
            \draw (4.5,3) -- (4.5,0);
             \draw (2.5,3) -- (2.5,0);
    \draw (3.5,3) -- (3.5,0);
   \end{scope}
\end{tikzpicture}\end{minipage}
\\ 
%%%
s^\ast_{1,2}=&\begin{minipage}{44mm}\begin{tikzpicture}[scale=0.5]
  \draw (0,0) rectangle (6,3);
  \foreach \x in {0.5,1.5,...,5.5}
    {\fill (\x,3) circle (2pt);
     \fill (\x,0) circle (2pt);}
      \draw (1.33,2.5) node {${\textbf{  1}}$};
  \draw (1.4,0.5) node {${\textbf{ m-\!1}}$};
  \begin{scope}%[thick]
    \draw (0.5,3) -- (1.5,0);
    \draw (1.5,3) -- (0.5,0);
        \draw (5.5,3) -- (5.5,0);
            \draw (4.5,3) -- (4.5,0);
    \draw (2.5,3) -- (2.5,0);
    \draw (3.5,3) -- (3.5,0);
   \end{scope}
\end{tikzpicture}\end{minipage} 
 e_{ij}= \begin{minipage}{34mm}\begin{tikzpicture}[scale=0.5]
  \draw (0,0) rectangle (6,3);
  \foreach \x in {0.5,1.5,...,5.5}
    {\fill (\x,3) circle (2pt);
     \fill (\x,0) circle (2pt);}
    \draw (4.5,3.5) node {$j$};
    \draw (4.5,-0.5) node {$\overline{j}$};
    \draw (1.5,3.5) node {$i$};
    \draw (1.5,-0.5) node {$\overline{i}$};
  \begin{scope} 
    \draw (0.5,3) -- (0.5,0);
    \draw (5.5,3) -- (5.5,0);
           \draw (1.5,3) arc (180:360:1.5 and 0.5);
    \draw (4.5,0) arc (0:180:1.5 and .5);
    \draw (2.5,3) -- (2.5,0);
    \draw (3.5,3) -- (3.5,0);
   \end{scope}
\end{tikzpicture} \end{minipage}\end{align*}
 \caption{The elements $s_{i,j}$, $t_i^k$, and $s_{i,j}^\ast$ and $e_{i,j}$  }
\label{tare} 
\end{figure}

\begin{rem}
The $p=1$ case was first  studied in the Appendix to
\cite{bcd}.  There it is christened the \emph{un-oriented cyclotomic
  Brauer algebra}; this algebra is not the (oriented) cyclotomic
Brauer algebra studied in \cite{amr} and elsewhere.  Both the oriented
and un-oriented cyclotomic Brauer algebras are specialisations of the
BMW algebra.  However, it is only the un-oriented algebra which has a
family of subalgebras which can be studied by analogy with the complex
reflection groups of type $G(m,p,n)$.
\end{rem}
 
\subsection{Clifford theory I}\label{deltapa}
Consider the algebra $\Bone$, as defined above.  Let $p|m$ and
specialise the parameter $\delta\in k^m$ so that $\delta_i$ is
zero for any index $i$ that is not congruent to zero modulo $p$,
i.e. take $$\delta=(\delta_0, \ldots, 0, \delta_p, 0,\ldots,
0,\delta_{2p},0, \ldots, 0, \delta_{p(d-1)},0, \ldots, 0)\in k^m.$$

Take the subspace of $B_{m,1,n}$ (with parameter as above) spanned by
all diagrams whose labels sum to a multiple of $p$.  Multiplication is
inherited from that in $B_{m,1,n}$; our choice of parameter ensures
that any closed loops removed are labelled by a multiple of $p$
(otherwise the product is zero) and therefore the diagram obtained by
their removal still lies in the same subspace.  Therefore this
subspace is in fact a subalgebra, and is clearly
isomorphic to $B_{m,p,n}$.  Throughout this paper, we shall only consider $B_{m,1,n}$ for the
  parameter as above.

  Let $\ZZ/p\ZZ$ act via the   $k$-algebra automorphism  of  $B_{m,1,n}$ given by conjugation by $t_1^d$. 
This maps $B_{m,p,n}$  onto $B_{m,p,n}$.  We have the following theorem.

\begin{thm}
The algebra $B_{m,1,n}$ (with parameter $\delta$ as above)  is the skew group algebra 
$$B_{m,1,n} =   B_{m,p,n} \rtimes   \ZZ/p\ZZ  = \left\{	\sum_{z \in  \ZZ/p\ZZ} d_z z	: d\in B_{m,p,n}		 \right\}$$
with linear multiplication given by the concatenation action:
$ z d = (zdz^{-1})	z	.		$

\end{thm}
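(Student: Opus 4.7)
The plan is to realise the generator of $\ZZ/p\ZZ$ concretely inside $B_{m,1,n}$ as the element $t_1^d$, and to verify that the resulting comparison map from the abstract skew group algebra is an algebra isomorphism. The first step is to check that conjugation by $t_1^d$ is a well-defined order-$p$ automorphism. Since $t_1$ has multiplicative order $m$ inside $B_{m,1,n}$, we have $(t_1^d)^p = t_1^{pd} = t_1^m = 1$, so $\operatorname{Ad}(t_1^d)$ has order dividing $p$ and furnishes the claimed $\ZZ/p\ZZ$-action. It preserves $B_{m,p,n}$: for any reduced diagram $D$, the product $t_1^d D t_1^{-d}$ differs from $D$ only in the labels of the strands meeting nodes $1$ and $\bar 1$, shifted by $+d$ and $-d$ respectively (or unchanged when these two nodes lie on a common through-strand). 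In either case the total label sum in $\ZZ/m\ZZ$ is preserved, so divisibility by $p$ is preserved.

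Next I would introduce the linear comparison map
$$\Phi : B_{m,p,n} \rtimes \ZZ/p\ZZ \longrightarrow B_{m,1,n}, \qquad a \otimes z \longmapsto a \cdot t_1^{zd},$$
and verify it is an algebra homomorphism. Using $(a \otimes z)(b \otimes w) = a \cdot z(b) \otimes (z{+}w)$ together with $z(b) = t_1^{zd}\, b\, t_1^{-zd}$, one computes
$$\Phi((a \otimes z)(b \otimes w)) = a\, t_1^{zd}\, b\, t_1^{-zd}\, t_1^{(z+w)d} = a\, t_1^{zd}\, b\, t_1^{wd} = \Phi(a \otimes z)\, \Phi(b \otimes w).$$

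The crux of the proof is showing that $\Phi$ is bijective. Counting bases, a reduced diagram of $B_{m,1,n}$ is specified by an underlying Brauer diagram together with an arbitrary $\ZZ/m\ZZ$-labelling of its $n$ strands, while a basis of $B_{m,p,n}$ comprises those whose label sum is divisible by $p$; thus $\dim B_{m,1,n} = p \cdot \dim B_{m,p,n} = \dim(B_{m,p,n} \rtimes \ZZ/p\ZZ)$, and it suffices to prove surjectivity. I would obtain this from the decomposition $B_{m,1,n} = \bigoplus_{j \in \ZZ/p\ZZ} V_j$, where $V_j$ is the span of reduced diagrams whose label sum is $\equiv j \pmod p$: right multiplication by $t_1^{zd}$ adds $zd$ to the label of the strand incident to $\bar 1$ and is therefore an invertible bijection carrying $V_0 = B_{m,p,n}$ onto the component indexed by the residue of $zd$. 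Assembling this over $z \in \ZZ/p\ZZ$ yields $B_{m,1,n} = \sum_{z} B_{m,p,n} \cdot t_1^{zd} = \operatorname{im}(\Phi)$, which together with the dimension count forces $\Phi$ to be an isomorphism.

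The main obstacle is the label-sum bookkeeping: verifying the invariance under conjugation by $t_1^d$, and controlling the effect of right multiplication by $t_1^{zd}$ on each $V_j$. These are purely diagrammatic computations, but each requires a small case analysis distinguishing whether nodes $1$ and $\bar 1$ share a strand and whether the strand in question is a through-strand or an arc. Once these checks are laid out, the dimension match and the direct sum decomposition into the $V_j$ finish the proof cleanly.
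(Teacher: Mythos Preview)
Your overall strategy---decompose $B_{m,1,n}$ by total label modulo $p$ into pieces $V_0,\dots,V_{p-1}$, identify $V_0$ with $B_{m,p,n}$, and use powers of $t_1$ to move between the pieces---is exactly the paper's approach, and your verification that $\Phi$ is an algebra homomorphism is correct.

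There is, however, a genuine gap in the surjectivity step. Right multiplication by $t_1^{zd}$ sends $V_0$ onto $V_{zd \bmod p}$, but as $z$ runs over $\ZZ/p\ZZ$ the residues $zd \bmod p$ fill out only the subgroup of $\ZZ/p\ZZ$ generated by the class of $d$, which has index $\gcd(d,p)$. Hence $\operatorname{im}(\Phi)=\bigoplus_{j\in\langle d\rangle}V_j$, and this is strictly smaller than $B_{m,1,n}$ whenever $\gcd(d,p)>1$ (equivalently, whenever $p^2\mid m$). For a concrete failure take $m=4$, $p=2$, $n=1$: then $d=2$, $B_{4,1,1}=k\{1,t_1,t_1^2,t_1^3\}$, $B_{4,2,1}=k\{1,t_1^2\}$, and your map has image $k\{1,t_1^2\}$, never reaching $t_1$ or $t_1^3$. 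So $\Phi$ is neither injective nor surjective in this case, and the dimension count alone cannot rescue it.

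The paper sidesteps this by using $t_1$ itself (not $t_1^d$) to produce the bijections $\mathcal B_i\to\mathcal B_{i+1}$, so that every residue class modulo $p$ is hit and one obtains $B_{m,1,n}=\bigoplus_{j=0}^{p-1}B_{m,p,n}\,t_1^{j}$ on the nose. If you wish to keep an explicit comparison map, you should send the abstract generator of $\ZZ/p\ZZ$ to $t_1$ rather than to $t_1^d$; but then the compatibility with the stated action (conjugation by $t_1^d$) and the relation $t_1^p\in B_{m,p,n}\setminus\{1\}$ both require extra care, so the naive map $a\otimes z\mapsto a\,t_1^{zd}$ does not suffice in general.
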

\begin{proof}
This is similar to the the group algebra case.  The natural diagram basis of $B_{m,1,n}$ can be partitioned into $p$ distinct sets, $\mathcal{B}_0$, $\mathcal{B}_1, \ldots, \mathcal{B}_{p-1}$, (of equal cardinality)  each consisting of the diagrams whose sum over the labels is congruent to $0,1, \ldots p-1$ modulo $p$ respectively.  The algebra $B_{m,p,n}$ has basis given by $\mathcal{B}_0$, as seen above. 

Left and right multiplication by $t_1$ both define bijections from $\mathcal{B}_i$ to  $\mathcal{B}_{i+1}$.  Using this to rewrite diagrams   as $t_1^i d'$ or as $ d''t_1^j$ for some $d',d''\in \mathcal{B}_0$, one can check that the multiplication on $  B_{m,p,n} \rtimes   \ZZ/p\ZZ$ is equivalent to the multiplication on $B_{m,1,n}$ (with $\delta$ as above).
 \end{proof} 
 
 This result means that we will later be able to apply methods from Clifford theory (see \cite{ramram}).
 
%   We may
%  therefore use the short exact sequence
%$$0 \to B_{m,p,n} \to B_{m,1,n} \to \ZZ/p\ZZ \to 0$$
%to study $B_{m,p,n}$ via Clifford theory.  This works entirely analogously to the group algebra case and the technicalities are discussed in \cite[Appendix]{ramram}.

%
%  It is easy to see that we have a short
%exact sequence of algebras:
%$$ 0 \to B_{m,p,n} \to B_{m,1,n} \to k(\ZZ/p\ZZ) \to 0, $$ which
%allows us to study the algebras $B_{m,p,n}$ via Clifford theory.
%Throughout this paper, we shall only consider $B_{m,1,n}$ for the
%parameter as above.

\subsection{Generators for subalgebras} Just as for the Brauer
algebra, it follows from the 
definitions that $\Bone$ is generated by $s_{i,i+1}$, $t_1$ and
$e_{1,2}$.  The group algebra of type $G({m,p,n})$  can be identified with the
subalgebra of $\Bone$ generated by $s_{1,2}^\ast, s_{i,i+1}$ and $t_1^p$; for
$n>2$ the subalgebra $\B$ of $\Bone$ is generated by 
$s_{1,2}^\ast, s_{i,i+1}$, $t_1^p$ and $e_{1,2}$ for $1\leq i \leq n-1$.
 
\subsection{Cyclotomic parameters} We have defined the algebra
$B_{m,p,n}$ in terms of $\delta =(\delta_0, \delta_p, \ldots,
\delta_{(d-1)p}) \in k^{d}$.  It is shown in \cite{bcd} that the following
cyclotomic functions of these parameters govern the representation
theory of the algebra $B_{m,1,n}$ under the assumption that $m$ is invertible in $k$.
 \begin{defn}
For each $0\leq r \leq d-1$ we define the $r$th   cyclotomic
parameter to be
\begin{align*}
\overline{\delta}_r = 
\frac{1}{m} \sum_{i=0}^{d-1} \xi^{ipr} \delta_{ip}.
\end{align*} where $\xi \in k^{\times}$
 is  a primitive $m$th root of unity
\end{defn}

%% \subsection{Automorphisms}
%%Let $\tau$ be the $k$-algebra automorphism on $B_{m,p,n}$ which is defined as conjugation by $t_1$; on the generators $\tau(s_{1,2})=s_{1,2}^\ast$, $\tau(t_{1})=t_{1}$ and $\tau(s_{i,i+1})=s_{i,i+1}$ for all $2\leq i\leq n$.  
%
%Let $\sigma$ denote the $k$-algebra automorphism on $B_{m,1,n}$ which is defined on generators by $\sigma(t_1)=\xi^\ast t_1^\ast$ and $\sigma(s_{i,i+1})=s_{i,i+1}$ for all $1\leq i\leq n$.  
%

 \section{Reflection groups of type $G(m,p,n)$}

We have already assumed that $k$ is an algebraically closed
field. Henceforth we shall also assume that   $k$ is of characteristic zero and we shall fix  $\xi \in k^{\times}$,
 a primitive $m$th root of unity.  
% Using the short exact sequence $$0 \to G({m,p,n})
%\to G({m,1,n}) \to k(\ZZ/p\ZZ) \to 0$$
The group algebra of the complex reflection group, $G{(m,1,n)}$,  is the \emph{skew group algebra}
$$ G({m,1,n}) =    G({m,p,n}) \rtimes   \ZZ/p\ZZ,  $$
this comes from taking the semidirect product of the two groups.  We shall study $G({m,p,n})$ via
Clifford theory. The results in this section can be found in
 \cite[Section 2.3]{MM}.

 \subsection{Type $G(m,1,n)$ combinatorics}   A partition is a finite
weakly-decreasing sequence of non-negative integers. An $m$-{\it
  partition} of $n$ is an $m$-tuple of partitions $\lambda
=(\lambda^0, \ldots , \lambda^{m-1})$ such that $\sum_{i=0}^{m-1}
|\lambda^i|=n$ (where $|\lambda^i|$ denotes the sum of the parts of
the partition $\lambda^i$).  %There is a natural ordering on this set,
%called \emph{dominance}, defined in \cite{bcd}.
We let $\Lambda(m,1,n)$ denote the set of all $m$-partitions of $n-2l$
for $l\leq n/2$; we let $\Lambda_0(m,1,n)$ denote the subset where
$l=0$.  %Given an $m$-partition $\lambda$ we associate the
%$m$-composition
%$$|\lambda|=(|\lambda^0|,|\lambda^1|, \ldots , |\lambda^{m-1}|).$$ For
%%an $m$-composition of $n$, $\omega$, we define another $m$-composition
%$[\omega]$ by
%$$[\omega]=([\omega_0], [\omega_1], \ldots, [\omega_{m-1}]=n)$$
%where $[\omega_r]=\sum_{i=0}^r \omega_i$ for $0\leq r < m$.
%For an $m$-partition $\lambda$ we define $[\lambda]=[|\lambda|]$.

%%%%%%%%%%%%%%%%%%%%%%%%%% needed

Let $\lambda$ be an $m$-partition of $n$.  A $\lambda$-tableau % (or $m$-tableau)
 is a
bijection $\mathfrak{t}:\lambda \to \{1,2...,n\}$, which we consider
as an $m$-tuple $\mathfrak{t} = (\mathfrak{t}^0, \ldots,
\mathfrak{t}^{m-1})$ of labelled tableaux where $\mathfrak{t}^s$ is a
$\lambda^s$-tableau for each $s$; the tableaux $\mathfrak{t}^s$ are
the components of $\mathfrak{t}$.  We say a tableau, $\ft$, is \emph{standard} if the entries in the component tableaux are increasing along the rows and columns.  
We let $\mathcal{T}_{\lambda}$ denote the set of standard $\lambda$-tableaux.
%Let $\mathfrak{t}^\lambda$ be the
%$\lambda$-tableau obtained by putting the integers $1,\ldots, n$ in
%order into the boxes in the Young diagram of $\lambda$ from left to
%right down successive rows.

For $\mathfrak{t}$ a tableau, we set $\mathfrak{t}(i)=s$ if the
integer $i$ appears in $\mathfrak{t}^s$. Let $1\leq i < j\leq n$, we
define the axial distance, $a(i,j)$, as follows: if
$\mathfrak{t}(i)\neq \mathfrak{t}(j)$ then $a(i,j)=\infty$ (so that
$1/a(i,j)=0$); if $\mathfrak{t}(i) = \mathfrak{t}(j)$ and $i$ occurs
in row $i_0$ and column $i_1$ and $j$ occurs in row $j_0$ and column
$j_1$, then $a(i,j)=(i_0-i_1) -(j_0-j_1)$.

If $\mathfrak{t}$ is a $\lambda$-tableau and $w \in \Sigma_n$ let
$w\mathfrak{t}$ be the tableau obtained from $\ft$ by replacing each entry
in $\mathfrak{t}$ by its image under $w$.
Let $\ft \in \mathcal{T}_\lambda$,
we set 
$\ft_{i\leftrightarrow i+1}$ equal to $s_{i, i+1}\mathfrak{t}$ if this is still a standard $\lambda$-tableau, and 0 otherwise. 
 
\label{spechth} 
 
 \begin{prop}%[Section 5 of \cite{gl}]
The algebra $kG({m,1,n})$ has simple modules indexed by the poset
$\Lambda_0(m,1,n)$.  For a given $m$-partition $\lambda$ of $n$, the
simple   module $\overline{\mathbf{S}(\lambda)}$
has a basis given by the set of standard $\lambda$-tableaux.
 With respect to this
basis the generators act as follows
\begin{align*} \overline{\rho_{\lambda}}(t_1)\ft = \xi^{\mathfrak{t}(1)}\ft,	
\quad\quad\quad
 \overline{\rho_{\lambda}}(s_{i,i+1})\ft = \frac{1}{a(i,i+1)}  \ft  + \left(1 + \frac{1}{a(i,i+1)}\right)  \ft_{i \leftrightarrow i+1}  	
\end{align*}
%with the obvious extension of  notation from $\mathcal{T}_\lambda$.
  \end{prop}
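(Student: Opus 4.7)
The plan is to realise the displayed formulas as Young's seminormal construction, adapted from the symmetric group to the wreath product $G(m,1,n)\cong(\mathbb{Z}/m\mathbb{Z})\wr\Sigma_n$. I will first verify that the assignments define a module, then establish simplicity, and finally match the count of constructed modules against the classical parametrisation of simples.

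First, I would check the defining relations of $G(m,1,n)$ on the span of standard $\lambda$-tableaux. The relation $t_1^m=1$ is immediate since $\xi$ is a primitive $m$th root of unity and $\overline{\rho_\lambda}(t_1)$ acts diagonally by $\xi^{\mathfrak{t}(1)}$. The relation $s_{i,i+1}^2=1$ reduces to a $2\times2$ computation: when $\mathfrak{t}_{i\leftrightarrow i+1}$ is standard, the tableau obtained from it by swapping $i$ and $i+1$ has axial distance $-a(i,i+1)$, and the identity $\tfrac{1}{a}\cdot\tfrac{1}{-a}+(1+\tfrac{1}{a})(1-\tfrac{1}{a})=1$ supplies the square relation; when the swap is non-standard, $a(i,i+1)=\pm1$ and the action is scalar. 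Commutation $s_{i,i+1}s_{j,j+1}=s_{j,j+1}s_{i,i+1}$ for $|i-j|\ge 2$ is clear as the two act on disjoint pairs of entries. Similarly $t_1$ commutes with $s_{i,i+1}$ for $i\ge 2$ since $\mathfrak{t}(1)$ is preserved. The braid relation $s_{i,i+1}s_{i+1,i+2}s_{i,i+1}=s_{i+1,i+2}s_{i,i+1}s_{i+1,i+2}$ is the main obstacle: it requires a case analysis on the configurations of $\{i,i+1,i+2\}$ in $\mathfrak{t}$, with the nontrivial cases being the ``three in one component'' configurations that reduce to the classical Young seminormal braid identity; the other cases, where some of the three lie in distinct components, reduce to commutation by the axial-distance convention $1/a=0$.

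Next, simplicity. The eigenvalues $\xi^{\mathfrak{t}(i)}$ of the conjugates $s_{i-1,i}\cdots s_{1,2}\,t_1\,s_{1,2}\cdots s_{i-1,i}$, together with the axial distances $a(i,i+1)$, form a complete set of simultaneous spectral data separating the standard basis vectors. An invariant subspace therefore is spanned by a subset of the basis; but the off-diagonal coefficient $1+\tfrac{1}{a(i,i+1)}$ is nonzero whenever $\mathfrak{t}_{i\leftrightarrow i+1}$ is standard, and any two standard $\lambda$-tableaux are connected by such admissible adjacent transpositions. Hence $\overline{\mathbf{S}(\lambda)}$ is simple, and the spectra of the Jucys-Murphy-type elements above show that distinct $\lambda\in\Lambda_0(m,1,n)$ yield non-isomorphic modules.

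Finally, completeness. By Clifford theory applied to $G(m,1,n)=(\mathbb{Z}/m\mathbb{Z})^n\rtimes\Sigma_n$, or directly by counting, the number of simple $kG(m,1,n)$-modules equals the number of $m$-partitions of $n$, i.e. $|\Lambda_0(m,1,n)|$. Equivalently, the hook-length identity
\[
\sum_{\lambda\in\Lambda_0(m,1,n)}|\mathcal{T}_\lambda|^2 \;=\; m^n\,n!\;=\;|G(m,1,n)|
\]
shows that the modules $\overline{\mathbf{S}(\lambda)}$ exhaust the Wedderburn decomposition, completing the proof.
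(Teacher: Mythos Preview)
The paper does not prove this proposition at all: it is stated as a classical fact about the representation theory of $G(m,1,n)$, with the blanket attribution ``The results in this section can be found in \cite[Section~2.3]{MM}'' covering it. Your sketch is therefore not competing with any argument in the paper; you have supplied the standard seminormal-form proof where the paper simply quotes the literature.

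Your outline is essentially correct, but two points deserve tightening. First, in checking the defining relations you omit the mixed relation between $t_1$ and $s_{1,2}$ (equivalently, that $t_1$ and $t_2:=s_{1,2}t_1s_{1,2}$ commute); this is easy but should be stated, and it splits into the same two cases $\mathfrak t(1)=\mathfrak t(2)$ and $\mathfrak t(1)\neq\mathfrak t(2)$ you already use elsewhere. Second, your simplicity argument asserts that the eigenvalues of the conjugates $t_i$ ``together with the axial distances $a(i,i+1)$'' separate the standard basis vectors, but axial distances are not eigenvalues of elements of the algebra, and the $t_i$ alone only record the component assignment $(\mathfrak t(1),\ldots,\mathfrak t(n))$, which does not distinguish tableaux with the same colouring. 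To make the spectral step honest you should invoke the full commuting family of cyclotomic Jucys--Murphy elements (whose eigenvalues record both the component and the content of each box), or else bypass the spectral step entirely and run the connectedness argument directly after reducing to the simultaneous $t_i$-eigenspaces via the classical symmetric-group seminormal irreducibility. Either fix is routine, and your dimension count $\sum_\lambda|\mathcal T_\lambda|^2=m^n n!$ then finishes the completeness exactly as you say.
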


 \subsection{Type $G(m,p,n)$ combinatorics}\label{G2n}\label{typeDspecht}
Let $pd=m$ and let $\sigma$ be a distinguished generator of
$\ZZ/p\ZZ$.  There is a natural action of the cyclic group $\ZZ/p\ZZ$
on the poset $\Lambda(m,1,n)$ given by permutation of the indices.
This extends to an action on tableaux by setting
  $$\sigma :(\mathfrak{t}^0,\mathfrak{t}^1,\ldots,\mathfrak{t}^{m-1} )
\mapsto (\mathfrak{t}^{m-d},\mathfrak{t}^{1-d}, \ldots
,\mathfrak{t}^{m-1-d}),$$ we denote
$\sigma(\mathfrak{t})=\mathfrak{t}^\sigma$. % (and extend this notation
%to standard tableaux and hence to $\mathcal{D}_\lambda$).

For $\lambda \in \Lambda(m,1,n)$ let $\Stab(\lambda)$ denote the
stabiliser of $\lambda$ under the permutation action.  We have
$\Stab_{\ZZ/p\ZZ}(\lambda)=\langle \sigma^t\rangle$ and $0\leq r <
p/t$.  We let $\Lambda(m,p,n)$ denote the set of pairs consisting of a representative of a
$\ZZ/p\ZZ$-orbit on $\Lambda(m,1,n)$ and an integer $0\leq r <
p/t$.  We let $\Lambda_0(m,p,n)$ denote the subset where $l=0$.

\begin{eg}
We have that $\Lambda(2,1,2)=\{ (\o, \o), (\o,2), (\o,1^2), (2,\o),
(1^2,\o)\}$.  There is a unique element, $(\o, \o)$, with non-trivial
stabiliser $\langle \sigma^1\rangle = \ZZ/2\ZZ$.  Therefore
$\Lambda(2,2,n)$ has four elements and (picking a set of orbit
representatives) is equal to the set $\{ (\o, \o)^0, (\o, \o)^1,
(2,\o), (1^2,\o)\}$.
\end{eg}

\subsection{Simple modules   for $G({m,p,n})$}\label{spechth2} We now
give the construction, via Clifford theory, of the simple modules for
$G({m,p,n})$.  We do not go into much detail here, and instead refer
to \cite{MM}.
%  We have a short exact sequence of algebras
%$$0 \to G({m,p,n})\to G({m,1,n})\to k(\ZZ/p\ZZ)\to 0.$$

Simple modules for $G({m,p,n})$ are labelled by a representation of
$\Stab(\lambda)\leq \ZZ/p\ZZ$ (given by an integer $0\leq r < p/t$)
and a representation of $G({m,1,n})$ (given by an $m$-partition).  
%We have a chain of subgroups $H_{m,1,|\lambda|} \leq \langle \sigma^t
%\rangle \wr H_{m,1,|\lambda|} \leq H_{m,1,n}$. One first wishes to
%induce to $H_{m,1,|\lambda|} \leq \langle \sigma^t \rangle \wr
%H_{m,1,|\lambda|}$, and then decompose this module into a direct sum
%of indecomposable modules.   
Recall that $\Stab(\lambda) = \langle \sigma^t \rangle \leq \ZZ/p\ZZ$.
 By Clifford theory, we have that  $\overline{\textbf{S}(\lambda)} \!\!\downarrow =
\oplus_{0\leq r<p/t} \mathbf{S}(\lambda^r)$, where
$$\mathbf{S}(\lambda^r)=\ker(\sigma^t -
\xi^{dtr})\overline{\mathbf{S}(\lambda)}$$ for $0\leq r < p/t$.   We  
let $$p_r= \frac{t}{p} \sum_{0\leq i <p/t}\xi^{-idtr}\sigma^{it}$$
denote the projection onto this subspace.

Take as representatives of the $ \langle \sigma^t \rangle$-orbits the
%$x$ such that 
$\mathfrak{t}\in\mathcal{T}_{\lambda}^0$
 where $\mathcal{T}_{\lambda}^0$
is the set of standard $\lambda$-tableaux with $\mathfrak{t}(1)< td$.  Take
the subspace spanned by tableaux in $\mathcal{T}_{\lambda}^0$
and apply the projection $p_r$, this provides a basis of
$\mathbf{S}(\lambda^r)$ (in the case that $r=0$ this is the average of the
$ \langle \sigma^t \rangle$-orbit).  Setting $\ft^r = p_r(\ft)$, we then
get formulae for the action of the generators of $G({m,p,n})$ on
$\mathbf{S}(\lambda^r)$ as follows: %.  We have that
 $\overline{\rho_\lambda}(t)p_r = p_{r+1}\overline{\rho_\lambda}(t)$, and so
 %$\overline{\rho_\lambda}(t)p_r = p_{dr}\overline{\rho_\lambda}(t)$, and so
$$\rho_{\lambda,r}(t^p)\ft^r = \xi^{pr \mathfrak{t}(1)}\ft^r, \quad
\rho_{\lambda,r}(s_1^\ast)\ft^r = 	\xi^{
  r(\ft(1)-\ft(2))}\rho_{\lambda,r}(s_1)\ft^r,								$$  
 $$\rho_{\lambda,r}(s_i)\ft^r = \frac{1}{a(i,i+1)}  \ft^r  + \left(1 +
\frac{1}{a(i,i+1)}\right)  \ft^r_{i \leftrightarrow i+1} . 	$$

 \section{Brauer algebras of type $G(m,p,n)$} 
 Unless otherwise stated, let $k$ denote an
 algebraically closed field of characteristic zero.  Fix $\xi$ to be a primitive $m$th root of unity.
 In this section we study $B_{m,p,n}$ via Clifford theory.  By
 verifying the first two conditions of a tower of recollement, we
 deduce conditions under which the algebra is quasi-hereditary; we
 leave it as an exercise for the reader to check that the algebra is a
 tower of recollement (in the sense of \cite{cmpx}) by checking that it obeys the remaining conditions.

\subsection{Highest weight theory }

Let $n\geq 2$. Suppose first that $\delta\neq \bf{0}\in k^{d}$ and fix a
$\delta_{ip} \neq  0$ for some $0\leq i<d$. We then define the
idempotent $e_{n-2}=\frac{1}{\delta_{ip}}t_{n-1}^{ip}e_{n-1,n}$ as
illustrated in Figure \ref{idem}. Note that it is a scalar multiple of
a diagram with $n-2$ through-strands.  If $\delta =0$ and $n\geq 3$
then we define $e_{n-2}$ to be the idempotent $e_{n-1,n}e_{n-2,n-1}$,
as illustrated in Figure \ref{idem}.
\begin{figure}[ht]
\centerline{
$   \dfrac{1}{\delta_{ip}}$ \ \begin{minipage}{54mm}\begin{tikzpicture}[scale=0.5]
  \draw (0,0) rectangle (6,3);
  \foreach \x in {0.5,1.5,...,5.5}
    {\fill (\x,3) circle (2pt);
     \fill (\x,0) circle (2pt);}
    \draw (5,2.4) node {$\textbf{ip}$};
%    \draw (2.5,-0.5) node {$r-2l$};
  \begin{scope}%[thick]
    \draw (3.5,3) -- (3.5,0);    \draw (0.5,3) -- (0.5,0);    \draw (1.5,3) -- (1.5,0);    \draw (2.5,3) -- (2.5,0);
           \draw (4.5,3) arc (180:360:.5 and 0.5);
   \draw (4.5,0) arc (180:360:.5 and -0.5);
%           \draw (5.5,3) arc (180:360:.5 and 0.5);
%   \draw (5.5,0) arc (180:360:.5 and -0.5);
   \end{scope}
\end{tikzpicture} \end{minipage}\quad
 \begin{minipage}{34mm}\begin{tikzpicture}[scale=0.5]
  \draw (0,0) rectangle (6,3);
  \foreach \x in {0.5,1.5,...,5.5}
    {\fill (\x,3) circle (2pt);
     \fill (\x,0) circle (2pt);}
%    \draw (2.5,-0.5) node {$r-2l$};
  \begin{scope}%[thick]
    \draw (3.5,3)  to [out=-90,in=90]  (5.5,0);    \draw (0.5,3) -- (0.5,0);    \draw (1.5,3) -- (1.5,0);    \draw (2.5,3) -- (2.5,0);
           \draw (4.5,3) arc (180:360:.5 and 0.5);
   \draw (4.5,0) arc (180:360:-.5 and -0.5);
%           \draw (5.5,3) arc (180:360:.5 and 0.5);
%   \draw (5.5,0) arc (180:360:.5 and -0.5);
   \end{scope}
\end{tikzpicture} \end{minipage}  }
 \caption{The idempotent $e_{n-2}$  (for $n=6$) in the cases that $\delta\neq0$, $\delta=0$ respectively.}
\label{idem}
\end{figure}
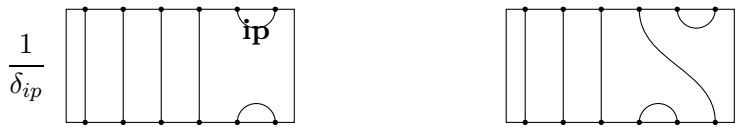

  A tower of recollement was defined in \cite{cmpx} to be a family of
  algebras (with idempotents) satisfying six conditions (A1--6). It is
  easy to see that
\begin{equation}\label{eAe}
e_{n-2}B_{m,p,n}e_{n-2}\cong B_{m,p,n-2}
\end{equation}
and that 
\begin{equation}\label{eAe2}
B_{m,p,n}/B_{m,p,n}e_{n-2} B_{m,p,n}\cong kG({m,p,n}).\end{equation} 
For the latter isomorphism, note that the lefthand-side has a basis consisting of the diagrams with no arcs.
Therefore we have the following
\begin{thm}\label{A2'}
Let $k$ be a field of characteristic $\cha(k)\geq 0$.  Let $m,n\in\NN$, and
$\delta \in k^m$.  If $n$ is even suppose $\delta \neq 0 \in k^m$.
The algebra $B_{m,p,n}(\delta)$ is quasi-hereditary if and only if
$\cha(k)>n$ and $\cha(k)\!\!\! \not| m$, or $\cha(k)=0$. 
\end{thm}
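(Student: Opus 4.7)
The plan is to proceed by induction on $n$ using the tower-of-recollement framework of \cite{cmpx}: the two isomorphisms (\ref{eAe}) and (\ref{eAe2}) established just above the theorem are exactly the content of conditions (A1)--(A2) of loc.\ cit., and together with (A3)--(A6) they yield, for each $n$, the short exact sequence of algebras
\[
0 \longrightarrow B_{m,p,n}e_{n-2}B_{m,p,n} \longrightarrow B_{m,p,n} \longrightarrow kG({m,p,n}) \longrightarrow 0,
\]
whose kernel is Morita equivalent to $B_{m,p,n-2}$ via the idempotent $e_{n-2}$.

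The key structural step I would carry out first is to show that this kernel is a \emph{heredity ideal}, i.e.\ that the multiplication map
\[
B_{m,p,n}e_{n-2} \otimes_{e_{n-2}B_{m,p,n}e_{n-2}} e_{n-2}B_{m,p,n} \longrightarrow B_{m,p,n}e_{n-2}B_{m,p,n}
\]
is an isomorphism. This is a diagrammatic check: every reduced diagram of rank strictly less than $n$ should factor, uniquely up to the middle $B_{m,p,n-2}$-factor, as a product passing through $e_{n-2}$. In the case $\delta\ne\mathbf{0}$ one uses the normalised form $\frac{1}{\delta_{ip}}t_{n-1}^{ip}e_{n-1,n}$, and in the remaining case ($\delta=\mathbf{0}$ with $n$ odd) the nested-arc idempotent $e_{n-1,n}e_{n-2,n-1}$; the restriction that $\delta\ne \mathbf{0}$ for $n$ even is exactly what forces one of these two forms to exist.

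Granting this, the Cline--Parshall--Scott criterion gives that $B_{m,p,n}$ is quasi-hereditary iff both $kG({m,p,n})$ is quasi-hereditary (equivalently, semisimple, since it is a group algebra of a finite group) and $B_{m,p,n-2}\cong e_{n-2}B_{m,p,n}e_{n-2}$ is quasi-hereditary. Iterating down to the base cases $B_{m,p,0}\cong k$ and $B_{m,p,1}\cong kG(m,p,1)$, quasi-heredity of $B_{m,p,n}$ becomes equivalent to the semisimplicity of $kG({m,p,n-2j})$ for every $0\le j\le\lfloor n/2\rfloor$. Since $|G({m,p,n})|=m^{n}n!/p$, Maschke's theorem reduces this to $\cha(k)$ dividing neither $m$ nor any integer in $\{1,\ldots,n\}$; the $j=0$ condition is the strongest and implies the rest. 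This is precisely the stated dichotomy, vacuously satisfied in characteristic zero.

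The main obstacle is the unglamorous diagrammatic verification that the multiplication map above is an isomorphism, especially because the two definitions of $e_{n-2}$ must be handled separately: tracking the $1/\delta_{ip}$-normalisation through concatenation in the generic case, and checking that routing diagrams through two nested arcs in the $\delta=\mathbf{0}$, $n$ odd case introduces no spurious scalars or non-unique factorisations. Once this input is in place, the equivalence with the arithmetic condition on $\cha(k)$ follows formally from Maschke and the multiplicativity of $|G({m,p,n})|$ in $n$.
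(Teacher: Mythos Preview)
Your proposal is correct and follows essentially the same route as the paper. The paper's argument is extremely terse: it simply records the two isomorphisms (\ref{eAe}) and (\ref{eAe2}) as conditions (A1)--(A2) of the tower-of-recollement framework of \cite{cmpx} and then states the theorem as an immediate consequence, leaving the remaining tower axioms to the reader; your write-up spells out the underlying inductive heredity-chain argument and the Maschke reduction that the paper leaves implicit.
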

 
  We leave it to the reader to verify   the remaining tower
  conditions using classical tower arguments (see \cite{cddm},
  \cite{cdm}) and Clifford theory.

\subsection{The standard modules of $B_{m,1,n}$}\label{cliff}
Recall our assumption on the parameter $\delta\in k^m$ from Section
\ref{deltapa}.  By \cite[Theorem 3.1.2]{bcd}, the algebra $B_{m,1,n}$ is an iterated inflation of
the group algebras $G({m,1,{n-2l}})$ along vector spaces $V_l$ spanned
by all possible $(m,n,l)$-tangles.  An $(m,n,l)$-tangle has $l$ arcs
denoted by $(i_p,j_p)$ (for $p=1,\ldots, l$) where $i_p$ (resp. $j_p$)
is the left (resp. right) vertex of the arc, and $n-2l$ free
lines. Each arc has a label given by an element $r\in \ZZ/m\ZZ$.  For
example a $(5,7,2)$-tangle is depicted in Figure \ref{dangle}.

\begin{figure}[ht]
\centerline{
 \begin{minipage}{34mm}\begin{tikzpicture}[scale=0.5]
   \foreach \x in {1.5,...,6.5,7.5}
    {\fill (\x,3) circle (2pt);
     }
%    \draw (2.5,3.5) node {$\tiny{r-2l}$};
%    \draw (2.5,-0.5) node {$r-2l$};
  \begin{scope}%[thick]
    %\draw (7.5,3) -- (7.5,1);  
     \draw (7.5,3) to [out=-90,in=90] (3.5,1);
     \draw (1.5,3) -- (1.5,1);    \draw (2.5,3) -- (2.5,1);
           \draw (3.5,3) arc (180:360:1 and 0.5);
           \draw (4.5,3) arc (180:360:1 and 0.5);
               \draw (4.4,2.5) node {$\textbf{1}$};               \draw (5.6,2.5) node {$\textbf{3}$};
   \end{scope}
\end{tikzpicture} \end{minipage}  }
 \caption{A $(5,7,2)$-dangle}
\label{dangle}
\end{figure}
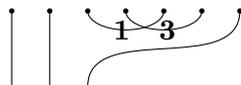

   We therefore have the following theorem:
%\begin{thm}\label{barstand}
% The algebra $B_{m,1,n}$ is cellular, with standard modules indexed by
% $\Lambda({m,1,n})$.  For a given $m$-partition, $ \lambda$, of
% $n-2l$, the standard module $\overline{\Delta(\lambda )}$ has basis:
% \begin{align*}
% \{ v \otimes  x     :  v \in V_l, x \in \overline{\mathbf{S}(\lambda ) }\}
% \end{align*}
%
%\end{thm}
\begin{thm}\label{barstand}
 The algebra $B_{m,1,n}$ has standard
  modules indexed by
 $\Lambda({m,1,n})$.  For a given $m$-partition, $ \lambda$, of
 $n-2l$, we have the standard module  
$$\overline{\Delta(\lambda )}\cong V_l \otimes_k  \overline{\mathbf{S}(\lambda ) }.$$
 \end{thm}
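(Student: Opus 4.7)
The plan is to deduce the theorem directly from the iterated inflation structure of $B_{m,1,n}$ cited from \cite[Theorem 3.1.2]{bcd}, combined with the general theory of cellular/quasi-hereditary algebras built by iterated inflation (König--Xi).

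First I would recall that, by the cited theorem, $B_{m,1,n}$ admits a chain of ideals
\begin{equation*}
0=J_{-1}\subset J_{\lfloor n/2\rfloor}\subset \cdots \subset J_1\subset J_0=B_{m,1,n}
\end{equation*}
in which each successive subquotient $J_{l-1}/J_l$ is isomorphic, as a $B_{m,1,n}$-bimodule, to the inflation $V_l\otimes_k kG(m,1,n-2l)\otimes_k V_l$, with $V_l$ the space of $(m,n,l)$-tangles and multiplication twisted by the usual ``concatenation pairing'' $V_l\otimes V_l\to kG(m,1,n-2l)$ arising from matching up the free strands of two tangles. By the general theory of iterated inflations, the cell modules of such an algebra at layer $l$ are obtained by inflating the cell modules of the inflating algebra $kG(m,1,n-2l)$ along $V_l$: one takes $V_l\otimes_k M$ with $B_{m,1,n}$-action specified on the $V_l$-factor via concatenation and on the $M$-factor via the residual group algebra element.

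Next I would invoke the hypothesis that $k$ has characteristic zero, together with Theorem~\ref{A2'}, to conclude that $B_{m,1,n}$ is quasi-hereditary and that each $kG(m,1,n-2l)$ is semisimple. Semisimplicity means the cell modules of $kG(m,1,n-2l)$ are precisely its simple modules, which by Proposition~\ref{spechth} are the $\overline{\mathbf{S}(\mu)}$ indexed by $\mu\in\Lambda_0(m,1,n-2l)$. Assembling the layers, the standard modules of $B_{m,1,n}$ are indexed by the disjoint union
\begin{equation*}
\bigsqcup_{0\le l\le n/2}\Lambda_0(m,1,n-2l)=\Lambda(m,1,n),
\end{equation*}
where the last equality is the definition of $\Lambda(m,1,n)$ given in Section~\ref{spechth}. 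For $\lambda$ an $m$-partition of $n-2l$, the corresponding standard module is then $V_l\otimes_k\overline{\mathbf{S}(\lambda)}$, which is the asserted formula.

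The only real work is checking that the generic ``cell modules of an iterated inflation along vector spaces'' coincide with the standard modules of the resulting quasi-hereditary structure; this is a standard calculation (see \cite{cddm,cdm}) once one has verified the tower conditions, which the paper already relegates to the reader. The main thing to be careful about is that the action of $B_{m,1,n}$ on $V_l\otimes_k\overline{\mathbf{S}(\lambda)}$ really is the one coming from the tangle concatenation and the group algebra action; this is immediate from the description of the multiplication in the inflation, together with the fact that closed loops are scalars by the chosen parameter $\delta$. I do not expect any nontrivial obstacle beyond bookkeeping.
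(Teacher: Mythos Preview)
Your proposal is correct and follows essentially the same route as the paper: the paper states the theorem as an immediate consequence of the iterated inflation structure of $B_{m,1,n}$ established in \cite[Theorem 3.1.2]{bcd}, without giving any further argument. Your write-up simply unpacks what that citation entails (via the K\"onig--Xi machinery and semisimplicity of $kG(m,1,n-2l)$ in characteristic zero), which is exactly what the paper leaves implicit.
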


  The action of a diagram $X\in B_{m,1,n}$ on $v\otimes x \in
\overline{  \Delta(\lambda )}$ is given as follows. Apply the diagram $X$ to the
  $(m,n,l)$-tangle $v$. If we obtain more than $l$ arcs, or a closed
  loop labelled by an integer not divisible by $p$, this element is
  sent to zero. Otherwise, we obtain another $(m,n,l)$-tangle $Xv$ and
  a signed permutation $\sigma \in G({m,1, n-2l})$ on the $n-2l$ free
  vertices of $Xv$, we then define $X(v\otimes x)=(Xv)\otimes \sigma
  x$.

% 

%
%\subsection{Clifford theory}
%Recall that $\tau$ is the $k$-algebra automorphism on $B_{m,p,n}$ which is defined as conjugation by $t_1$; on the generators $\tau(s_{1,2})=s_{1,2}^\ast$, $\tau(e_{1,2})=e_{1,2}^\ast$, $\tau(t_{1})=t_{1}$ and $\tau(s_{i,i+1})=s_{i,i+1}$ for all $2\leq i\leq n$.  Recall that $\sigma$ denotes the   $k$-algebra automorphism on $B_{m,p,n}$ which is defined on generators by $\sigma(t_1)=\xi t_1$, $\sigma(e_{1,2})=e_{1,2}$, and $\sigma(s_{i,i+1})=s_{i,i+1}$ for all $1\leq i\leq n$.  
%
%  We have that $\ZZ/p\ZZ$ acts by conjugation on $B_{m,p,n}$.  One can easily check that under this action the \emph{skew group algebra} defined in \cite{ramram} is the algebra $B_{m,1,n}$.  
%We may therefore use the short exact sequence
%$$0 \to B_{m,p,n} \to B_{m,1,n} \to \ZZ/p\ZZ \to 0$$
%to study $B_{m,p,n}$ via Clifford theory.

%
%\subsection{Clifford theory}
% Let $\ZZ/p\ZZ$ act via the   $k$-algebra automorphism, $\tau$, on  $B_{m,p,n}$ given by conjugation by $t_1^d$. %; on the generators $\tau(s_{1,2})=s_{1,2}^\ast$, $\tau(e_{1,2})=e_{1,2}^\ast$, $\tau(t_{1})=t_{1}$ and $\tau(s_{i,i+1})=s_{i,i+1}$ for all $2\leq i\leq n$.  
%This maps $B_{m,p,n}$  onto $B_{m,p,n}$.
%   We may
%  therefore use the short exact sequence
%$$0 \to B_{m,p,n} \to B_{m,1,n} \to \ZZ/p\ZZ \to 0$$
%to study $B_{m,p,n}$ via Clifford theory.  This works entirely analogously to the group algebra case and the technicalities are discussed in \cite[Appendix]{ramram}.
%
%  
%

\subsection{Standard  modules for $B_{m,p,n}$}
%In the previous section we gave a Clifford theoretic construction of
%the indecomposable summands of the restriction of a standard module.
%In this section, we construct the standard modules of $B_{m,p,n}$ via the tower
%formalism and show that these two constructions coincide.
%  
By (\ref{eAe}) and (\ref{eAe2}), we have that the standard modules for $B_{m,p,n}$
are of the form
 $$\Delta_n(\lambda^r)=(B_{m,p,n}/(B_{m,p,n}e_{n-2l-2}B_{m,p,n}))e_{n-2l} \otimes_{B_{m,p,n-2l}} \mathbf{S}(\lambda^r).$$

This module is spanned by the elements $d%e_{n-2l}
 \otimes_{B_{m,p,n-2l}}
\ft^r$ where $\ft^r\in \mathbf{S}(\lambda^r)$ and $d\in B_{m,p,n}$ with precisely $(n-2l)$ through-lines.  By taking elements of
$B_{m,p,n-2l}$ across the tensor product we can just consider diagrams
$d$ with (a) no crossing through-lines (b) only the leftmost
through-line has a non-zero label, (c) this label, $q$, is
strictly less than $p$ (as any diagram $d'\in B_{m,p,n}$ can be written as a product $d'= d\sigma$ for $\sigma \in G(m,p,n)$ and $d$ of the required form).  Of course, these diagrams must still be
elements of $B_{m,p,n}$ and so the northern arcs of the diagram must have labels
totalling $p-q$ modulo $p$.  Figure \ref{modp} contains an example
for type $G(6,3,7)$.

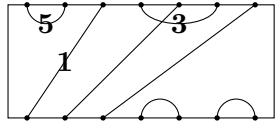
\begin{figure}[ht]
\centerline{
\begin{minipage}{45mm}\begin{tikzpicture}[scale=0.5]
  \draw (0,0) rectangle (7,3);
  \foreach \x in {0.5,1.5,...,6.5}
    {\fill (\x,3) circle (2pt);
     \fill (\x,0) circle (2pt);}
  \begin{scope} 
     \draw (2.5,3) -- (0.5,0);     \draw (4.5,3) -- (1.5,0);    \draw (6.5,3) -- (2.5,0); 
%%%%arcs
    \draw (0.5,3) arc (180:360:0.5 and 0.5);
    \draw (5.5,0) arc (180:360:0.5 and -0.5);   \draw (3.5,0) arc (180:360:0.5 and -0.5);    \draw (3.5,3) arc (180:360:1 and 0.5);
                                      \draw (1,2.5) node {$ \textbf{5}$};\draw (1.5,1.5) node {$ \textbf{1}$} ;\draw (4.5,2.5) node {$ \textbf{3}$};
  \end{scope}
\end{tikzpicture}\end{minipage}}
 \caption{A diagram of type $G(6,3,7)$ satisfying condition (a), (b),
   and (c), above.} 
\label{modp}
\end{figure}

One can then pass the decoration on the left-most strand through the tensor product by noting that $t_1 \ft^r = \ft^{r+1}$ and that $t_1^{p/t} \ft^r = \ft^{r}$, by construction.   Define 
 $V_l(q,p/t)\subset V_l$ to be  the subspace of dangles whose label sum
is congruent to $-q$ modulo $p/t$.

\begin{thm}\label{basisss}
The algebra $B_{m,p,n}$ has standard
modules labelled by $\Lambda(m,p,n)$.  For $\lambda^r \in
\Lambda(m,p,n)$, we have that  
\begin{align*}
 \Delta_n(\lambda^r)\cong\{ v \otimes x : v \in V_l(q,p/t), x \in  %\Delta_{n-2l}
\mathbf{S}(\lambda^{q+r}), 0 \leq q <p/t		\}
\end{align*}
%\begin{align*}
% \Delta_n(\lambda^r)\cong\{ v \otimes x : v \in V_l(q,p/t), x \in  %\Delta_{n-2l}
%\mathbf{S}(\lambda^{q+r}), 0 \leq q <p/t		\}
%\end{align*}
%where $V_l(q,p/t)\subset V_l$ is the space of dangles whose label sum
%is congruent to $ -q$ modulo $p/t$.  
 \end{thm}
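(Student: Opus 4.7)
My plan is to follow the strategy of Theorem \ref{barstand}, now in the Clifford setting. The indexing by $\Lambda(m,p,n)$ is immediate from (\ref{eAe}) and (\ref{eAe2}): by the tower of recollement formalism, standards of $B_{m,p,n}$ are parametrized by pairs $(l,\mu)$ with $\mu$ labelling a simple of $kG(m,p,n-2l)$, and by Section \ref{spechth2} such pairs are precisely the elements of $\Lambda(m,p,n)$. Starting from the definition $\Delta_n(\lambda^r) = (B_{m,p,n}/B_{m,p,n}e_{n-2l-2}B_{m,p,n})e_{n-2l}\otimes_{B_{m,p,n-2l}}\mathbf{S}(\lambda^r)$, I would analyse the natural spanning set $\{d\otimes\ft^r\}$ where $d$ has exactly $n-2l$ through-strands (diagrams with fewer through-strands vanish in the quotient).

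Next I would put each $d$ into the normal form (a)--(c) described immediately before the theorem by absorbing suitable elements of $B_{m,p,n-2l}$ across the tensor product: permutations straighten crossings of through-strands; label-shifting diagrams of the form $t_j^k t_1^{-k}$ (which carry zero total label modulo $p$ and hence lie in $B_{m,p,n-2l}$) transport labels onto the leftmost through-strand; and $t_1^p$ cuts the leftmost label down modulo $p$. Each such absorption scales or permutes $\ft^r$ within $\mathbf{S}(\lambda^r)$ by the formulas of Section \ref{spechth2}. To match the claimed direct-sum description, I would then interpret the leftmost-strand label $q\in\{0,\ldots,p-1\}$ on a normal-form diagram $d$ with southern dangle $v$ as the action of $t_1^q$ on the tableau factor, using the identities $t_1\ft^r=\ft^{r+1}$ and $t_1^{p/t}\ft^r=\ft^r$ to rewrite $d\otimes\ft^r$ as $v\otimes\ft^{q+r}$. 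The second identity forces $q$ to lie in $\{0,\ldots,p/t-1\}$; the remaining modular information about $q$ is absorbed into $v$ through the constraint $|v|\equiv -q\pmod{p/t}$, i.e.\ $v\in V_l(q,p/t)$, which is just the restatement of the condition that the total label of $d$ be a multiple of $p$.

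The main obstacle is that this ``passage of $t_1^q$ across the tensor'' is not literal, since $t_1^q\notin B_{m,p,n-2l}$ when $p\nmid q$. I would handle this by realising $\Delta_n(\lambda^r)$ as the image under the Clifford projection $p_r$ of the restriction $\overline{\Delta(\lambda)}\!\downarrow_{B_{m,p,n}}$ (combining the Clifford theory of Section \ref{spechth2} with Theorem \ref{barstand}): the tangle-tableau basis of $\overline{\Delta(\lambda)}=V_l\otimes_k\overline{\mathbf{S}(\lambda)}$ projects to a basis of $\Delta_n(\lambda^r)$ which groups naturally according to the Clifford component of the $\overline{\mathbf{S}(\lambda)}$-factor, producing exactly the proposed description. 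Linear independence of the resulting set follows from the dimension count $\dim\Delta_n(\lambda^r)=(p/t)^{-1}\dim\overline{\Delta(\lambda)}$ coming from the Clifford decomposition, which matches the cardinality $\sum_{q=0}^{p/t-1}\dim V_l(q,p/t)\cdot\dim\mathbf{S}(\lambda^{q+r})$ of the claimed basis.
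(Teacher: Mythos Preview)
Your first two paragraphs are exactly the argument the paper gives: the text immediately preceding the theorem statement spells out the normal form (a)--(c) and the passage of the leftmost label across the tensor via $t_1\ft^r=\ft^{r+1}$, $t_1^{p/t}\ft^r=\ft^r$. So up to that point you and the paper coincide.

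Where you diverge is in the third paragraph. You are right that ``passing $t_1^q$ across'' is not a literal manipulation over $B_{m,p,n-2l}$; the paper simply records the formula $t_1\ft^r=\ft^{r+1}$ (which holds because each $\ft^r$ is by construction an element of the ambient $\overline{\mathbf{S}(\lambda)}$, where $t_1$ does act) and leaves it at that. Your proposed resolution---identify $\Delta_n(\lambda^r)$ with $p_r\bigl(\overline{\Delta(\lambda)}\!\downarrow\bigr)$ and then read off the basis---is precisely the content of the paper's \emph{next} result, Theorem~\ref{3.3.1}. But be careful about the logical order: in the paper, Theorem~\ref{3.3.1} is proved \emph{by} computing $p_r(v\otimes\ft)=v\otimes\ft^{q+r}$ and recognising this as the basis established in Theorem~\ref{basisss}. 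So if you invoke the Clifford identification $\Delta_n(\lambda^r)\cong p_r(\overline{\Delta(\lambda)})$ to prove Theorem~\ref{basisss}, you must supply an independent argument for that identification (e.g.\ a direct comparison of the two induced-module constructions via $B_{m,1,n}e_{n-2l}\cong B_{m,p,n}e_{n-2l}\otimes_{B_{m,p,n-2l}}B_{m,1,n-2l}$), rather than appealing to Theorem~\ref{3.3.1}. The same caveat applies to your dimension count $\dim\Delta_n(\lambda^r)=(p/t)^{-1}\dim\overline{\Delta(\lambda)}$, which presupposes the Clifford decomposition of $\overline{\Delta(\lambda)}\!\downarrow$ into the standard modules. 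In short: same ingredients as the paper, opposite order of assembly; just make sure the reordering is not circular.
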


\begin{eg}
The modules $\Delta((1,0,1,0)^0)$ and  $\Delta((1,0,1,0)^1)$ for $B_{4,4,4}$ are both 24-dimensional.  %To verify this note that $\overline{\mathbf{S}(1,0,1,0)}$ is 2-dimensional and $V_1$ is $4 \times  {4\choose 2}
%\left(\begin{array}{ccc}0 & 0 & 0 \\0 & 0 & 0 \\0 & 0 & 0\end{array}\right)$ dimensional and therefore $\overline{	\Delta(1,0,1,0)}$ is 48-dimensional.
Let $\ft$ denote the unique element of $\mathcal{T}^0_{(1,0,1,0)}$ and let $v$ be the dangle with a single undecorated arc $(1_p,2_p)$.
Some typical elements of $\Delta((1,0,1,0)^0)$ are 
$$v \otimes \ft^0, \quad t_1^2v \otimes \ft^0,\quad
t_1v \otimes \ft^1, \quad t_1^3v \otimes \ft^1,$$
and 
some typical elements of $\Delta((1,0,1,0)^1)$ are 
$$v \otimes \ft^1, \quad t_1^2v \otimes \ft^1,\quad
t_1v \otimes \ft^0, \quad t_1^3v \otimes \ft^0.$$
In fact, the bases of both modules can be obtained by applying undecorated elements of $G(4,4,2)$ to the elements above (i.e. by permuting the nodes of the dangles).
\end{eg}

%For any $X\in B_{m,p,n}$, any $d$ of the above form, and any element
%$\ft^r\in \mathbf{S}(\lambda^r)$ we define $X(d\otimes x)$ as follows. Place
%the diagram $X$ above 
% $d$.  If we obtain more than $l$ arcs, or a closed loop labelled by
%an integer not divisible by $p$, this element is sent to
%zero. Otherwise, we obtain a diagram
% $Xd$  with labels totalling a multiple of $p$.  If the top
%has label total $q$, then we also  obtain an element
%$\sigma\in G({m,1,n})$ acting on the free $n-2l$ nodes with labels
%totalling  $p-q$. 
%Write $\sigma=t_1^{p-q} \sigma' $ as the product such that
%$\sigma'\in G(m,p,n)$. 
%We define $X(d\otimes \ft^r)=(Xdt_1^{p-q})\otimes \sigma' \ft^r =
%(Xd)\otimes \sigma \ft^{r-q}$. 
%We have the following theorem:

  %
%\begin{eg}
%Consider the $B_{2,1,2}$-module $\Delta_2(\o\o)$.  This module is 2-dimensional with basis $\{T_1^0e_{1,2},T_1^1e_{1,2}\}$.  Taking the projections $p_0$ and $p_1$ we get $B_{2,2,2}$-modules
% $$\Delta_2(\o\o^0)=(1+S){\rm{Span}}_k\{T_1^0e_{1,2},T_1^1e_{1,2}\}$$ and
% $$\Delta_2(\o\o^1)=(1-S){\rm{Span}}_k\{T_1^0e_{1,2},T_1^1e_{1,2}\}$$
%We have that $(1+S)T_1^0e_{1,2}= (1+S)T_1^1e_{1,2}= e_{1,2}$ and $(1-S)T_1^0e_{1,2}= (1-S)T_1^1e_{1,2}= t_1e_{1,2}$.
%\end{eg}

\subsection{Clifford theory II}
We will use Clifford theory techniques to give the decomposition of
the restriction of a standard, simple, or projective module 
from $B_{m,1,n}$ to $B_{m,p,n}$.  

\subsubsection{Standard modules} Let $(i_p,j_p)$ and $(i_q,j_q)$ be two arcs in $v$ with annotations
$l$ and $k$, respectively.  We let $\epsilon_{l,ip}$ denote the
Kronecker delta which is 1 or 0 if $l=ip$ for some $0\leq i < d$,
or not, respectively.  We write $i\not \in v$ if $i$ labels a free
line in $v$.  Finally, note that there are $n$ nodes on the top of a
dangle and $n-2l$ on the bottom of a dangle.
%  If we wish to move a permutation through the tensor product, we
%  therefore must fix some notation.  Let $s_{i,i+1}\in B_{m,1,n}$ be
%  the element swapping the $i$ and $(i+1)$ nodes  
If the $i$th node on the top of the diagram is a free node, we let
$\underline{i}$ denote the corresponding node on the bottom of the
dangle.

From Theorem \ref{barstand}, we deduce that the action of the
generators of $\Bone$ (under our assumption on the parameter
$\delta\in k^m$ from Section \ref{deltapa}) on the standard module
$\overline{\Delta(\lambda)}$ is as follows:
 \begin{align*}
\overline{\pi}_\lambda (t_1)( v \otimes \ft  ) &= \begin{cases}
\xi^{\mathfrak{t} (1)}( v \otimes \ft  )	 &\text{ if } 1 \not\in v		\\
(t_1 v) \otimes \ft  		&\text{ if } 1=i_p \text{ for some } p
\end{cases}
\\
\overline{\pi}_\lambda(s_{i,i+1})( v \otimes \ft  ) &= 
\begin{cases}
\frac{1}{a (\underline{i},\underline{i+1})} (v \otimes \ft  ) + \left(1 + \frac{1}{a (\underline{i},\underline{i+1})}\right)\! (v \otimes \ft_{\underline{i} \leftrightarrow \underline{i+1}}  )\!\!\!\! &\text{ if } i,i+1 \not \in v		\\ 
(s_{i,i+1}v) \otimes \ft  
 &\text{ otherwise}  
 \end{cases} \\
%\overline{\pi}_\lambda(s_{1,2}^\ast)(v\otimes \ft) &=
%\begin{cases}
%\xi^{\mathfrak{t}  (1)-\mathfrak{t}  (2)} \overline{\pi}_\lambda(s_{1,2})(v\otimes \ft) &\text{ if } 1,2 \not\in v\\
%\xi^{\mathfrak{t}  (1) } \overline{\pi}_\lambda(s_{1,2})((t_2^{-1}v)\otimes \ft) &\text{ if } 1 \not\in v, 2=i_p  \\
% \overline{\pi}_\lambda(s_{1,2})((t_1t^{-1}_2v)\otimes \ft) &\text{ if }  1=i_p, 2=i_q \\
%\end{cases}
%\\
\overline{\pi}_\lambda (e_{1,2})( v \otimes \ft  ) &=\begin{cases}
0 &\text{ if }1,2\not\in v	\\
%\epsilon_{i,  j} 
\epsilon_{l,ip} \delta_{ip}  %\overline{\pi}_\lambda(s_{1,2})
( (t ^{-l}_1v) \otimes \ft) 
&\text{ if }1= i_p, 2=j_p   \\
%\epsilon_{\mathfrak{t}(1),\sgn(p)}
\xi^{l\mathfrak{t}  (1)}\overline{\pi}_\lambda(s_{1,j_p})(( t ^{-l}v) \otimes \ft)
&\text{ if }1 \not\in v, 2 =i_p  %\text{ for some } t^jp 
\\
\overline{\pi}_\lambda(s_{1,j_p})( ( t_{1}^{l} t_{2}^{-l}v )\otimes \ft)
	&\text{ if }1=i_q,  2 =i_p %\text{ for some } t^lp\neq t^kq  
\end{cases}
 \end{align*}
    The case of $\overline{\pi}_\lambda (e_{1,2})$ is symmetric in the
    coordinates $1,2$ and so we have omitted the details.

We recall that $\{t_1^p,s_1^\ast, e_{1,2} , s_{i,i+1} : 0\leq q < r, 1\leq i
\leq n-1\}$ generate $B_{m,p,n}$ for $n>2$, and that the quotient
$B_{m,1,n}/B_{m,p,n}$ is cyclic, generated by $t$.  Let $\chi$ be the
generator of the group of linear characters of the quotient which maps
$t$ to $\xi^d$.  From the formulae for the action of $t_1, s_{1,2}^\ast$,
$e_{1,2}$, and the $s_{i,i+1}$ for $1\leq i \leq n-1$, we see that
%%%%%%%%%%%%%%%%%%%%%%%%%%%%%%%%%%%%%%%%%%%%%%%%%%%%%%%%%%%%%%%%%%
the map 
$$		\sigma (v \otimes \ft ) =% (\xi_p^{p/t})^r 
\xi^{dq}	v \otimes \sigma(\ft) ,		$$
where $q$ is the total label on $v$,
induces an isomorphism
$\chi \otimes 	\overline{\pi}_\lambda=	\overline{\pi}_{\sigma(\lambda)}. 	$

 It is easy to check that $\sigma^t$ commutes with
the action of $\rho_\lambda(s_{i,i+1})$, $\rho_\lambda(s_1^\ast)$ and
$\rho_\lambda(e_{1,2})$, and that $\sigma^t \circ\rho_\lambda(t_1)=
\xi^{dt}\rho_\lambda(t_1) \circ \sigma^t$.  It follows that $\sigma^t$
commutes with the action of $B_{m,p,n}$ and that 
$$\overline{\Delta_n(\lambda)}
\!\!\downarrow_{B_{m,p,n}}^{B_{m,1,n}}\cong\oplus_{0\leq r <p/t} \ker(\sigma^t-\xi^{dtr})\overline{\Delta_n(\lambda)},$$
(although these direct summands need not be indecomposable).
% (it is yet to be proved that these are standard!). 
  For a given $0\leq r <p/t$, we have that the projection onto
  $\ker(\sigma^t-\xi^{dtr})$ is given by:
  %$$p_r=\frac{1}{t}\sum_{0\leq i < p/t}\xi^{-ir} \sigma^{id}.$$
 $$p_r=\frac{t}{p}\sum_{0\leq i < p/t}\xi^{-idtr}\sigma^{it}.$$

\begin{thm}\label{3.3.1}
The restriction of a standard module, $\overline{\Delta(\lambda)}$, for $B_{m,1,n}$ is  a direct sum of $p/t$ standard
modules for $B_{m,p,n}$.  For $\lambda \in
\Lambda(m,1,n)$, we have that 
\begin{align*}
 \Delta_n(\lambda^r) \cong \ker((\sigma^t-\xi^{dtr})\overline{\Delta_n(\lambda)}) .
\end{align*}
 \end{thm}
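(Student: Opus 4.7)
The plan is to leverage the setup already assembled immediately before the theorem. Since $\sigma^t$ commutes with the $B_{m,p,n}$-action on $\overline{\Delta_n(\lambda)}$, the decomposition $\overline{\Delta_n(\lambda)}\!\downarrow\,\cong\,\bigoplus_{0\leq r<p/t}\ker(\sigma^t-\xi^{dtr})$ already holds as $B_{m,p,n}$-modules, with explicit projectors $p_r=\tfrac{t}{p}\sum_{0\leq i<p/t}\xi^{-idtr}\sigma^{it}$. What remains is to match each eigenspace with the standard module $\Delta_n(\lambda^r)$ described in Theorem \ref{basisss}.

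First I would decompose the eigenspaces as tensor products. Using $\overline{\Delta_n(\lambda)}\cong V_l\otimes\overline{\mathbf{S}(\lambda)}$ and iterating the formula $\sigma(v\otimes\ft)=\xi^{dq(v)}v\otimes\sigma(\ft)$ from the preceding subsection (where $q(v)$ is the total label on $v$), one gets $\sigma^t(v\otimes\ft)=\xi^{dtq(v)}v\otimes\sigma^t(\ft)$. Since $\sigma^t$ acts diagonally on the tensor basis with the two factors independent, the $r$-th eigenspace decomposes as $\bigoplus_{0\leq q<p/t}V_l(q,p/t)\otimes\mathbf{S}(\lambda^{q+r})$, where $V_l(q,p/t)$ is the $(-q)$-eigenspace of $\sigma^t$ on $V_l$ (the subspace of tangles with label sum congruent to $-q$ modulo $p/t$) and $\mathbf{S}(\lambda^{q+r})$ is the $(q+r)$-eigenspace inside $\overline{\mathbf{S}(\lambda)}$ from Section \ref{spechth2}. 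Term-by-term this reproduces the basis description of $\Delta_n(\lambda^r)$ from Theorem \ref{basisss}, and a short computation confirms that $p_r(v\otimes\ft)=v\otimes\ft^{r-q(v)}$, so the map is realised explicitly by the projector.

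It then remains to verify that the $B_{m,p,n}$-action on $\ker(\sigma^t-\xi^{dtr})$, inherited by restriction, agrees with the action on $\Delta_n(\lambda^r)$ defined intrinsically. Since $\sigma^t$ is $B_{m,p,n}$-linear, each eigenspace is automatically a submodule, so the content is to check that the generators $t_1^p$, $s_{1,2}^\ast$, $s_{i,i+1}$ and $e_{1,2}$ act by the formulas one would write down for $\Delta_n(\lambda^r)$. This is immediate from the explicit action formulas for $\overline{\pi}_\lambda$ given in Section \ref{cliff}, combined with the identities $t_1\ft^{r'}=\ft^{r'+1}$ and $t_1^{p/t}\ft^{r'}=\ft^{r'}$ from Section \ref{spechth2}. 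The main obstacle is the combinatorial bookkeeping: one must be careful that the parametrisation of basis vectors by pairs $(v,\ft)$ with $\ft\in\mathcal{T}_\lambda^0$ and $v$ stratified by $q(v)$ modulo $p/t$ gives each basis vector of $\Delta_n(\lambda^r)$ exactly once, without double-counting either $\langle\sigma^t\rangle$-orbits on tableaux or on tangle labels.
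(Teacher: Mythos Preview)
Your approach is essentially the same as the paper's: compute $\sigma^t(v\otimes\ft)=\xi^{dtq(v)}\,v\otimes\sigma^t(\ft)$, apply the projector $p_r$ to obtain $v\otimes\ft^{r-q(v)}$, and identify the result with the basis of $\Delta_n(\lambda^r)$ given in Theorem~\ref{basisss}. Your write-up is in fact a little more careful than the paper's (which has a small index slip in the exponent of its projector computation), and your extra paragraph checking that the inherited $B_{m,p,n}$-action agrees with the intrinsic one is a welcome sanity check, though strictly speaking it is automatic once the basis identification is made, since both actions come from restricting $\overline{\pi}_\lambda$.
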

  
\begin{proof}
%We shall show that
%$\ker(\sigma^t-\xi^{dtr})(\overline{\Delta_n(\lambda)}) =\{ v \otimes
%x : v \in V_l^q, \ft^{q+r} \in \mathbf{S}(\lambda^{q+r}), 0 \leq q <p/t
%\}$.
It suffices to show that   $p_r \overline{\Delta(\lambda)}$ is the standard module constructed in the previous section.
For a given $x \in V_l\otimes \overline{\textbf{S}(\lambda)}$, 
% consider the subspace 
%$$\{	v \otimes \ft : v\in V_l	\}    \subseteq \overline{\Delta_n(\lambda)}$$
%%%%%
%and apply the projection $p_r$ to this subspace.
we have that 
\begin{align*}
\sigma^t (v \otimes \ft  ) 	&= \xi^{dqt} v \otimes \sigma^t(\ft)
%	\\	 
%				 	&=  v \otimes   \sigma^t( t_1^qx)
\end{align*} where $q$ is the label total on $v$, and therefore \begin{align*}
p_r (v \otimes \ft ) &=    v \otimes \left( \frac{t}{p}\sum_{0\leq i < p/t}	\xi^{-idtr +dqt}	\sigma^{it}\ft \right)
 =  v \otimes \ft^{q+r}.
	\end{align*}These elements form the basis of $\Delta(\lambda^r)$ given in Theorem \ref{basisss}, and the result follows.
  \end{proof}

\subsubsection{Simple	and projective   modules}  
By Clifford theory \cite[Theorems 1.1 and 1.3]{ramram}, we have that the simple
$B_{m,1,n}$-module, $\overline{L(\lambda)}$, restricts to a direct sum of simple $B_{m,p,n}$-modules. 
%%%%%%%%%%%%%%%
 As each simple $B_{m,1,n}$-module appears as the head of the unique standard module with the same label, we have that $$\overline{L(\lambda)}\!\!\downarrow^{B_{m,1,n}}_{B_{m,p,n}} \cong \oplus_{0 \leq r <
 p/t}L(\lambda^r),$$ by Frobenius reciprocity and Theorem \ref{3.3.1}. 
As $L(\lambda)$ is a quotient of $\overline{\Delta(\lambda)}$, we can use the action of $\sigma^t$ on the quotient to characterise $L(\lambda^r)$ as $\ker(\sigma^t-\xi^{dtr})\overline{L(\lambda)}$.

The algebra,
$B_{m,1,n}$, is free as a $B_{m,p,n}$-module.  Therefore the restriction of a 
projective  module, is   projective.    By Frobenius
reciprocity, 
%the projective $B_{m,p,n}$-module $\overline{P(\lambda)}$  restricts to
%$\oplus_{0 \leq r <
% p/t} P(\lambda^r)$.  
$$\overline{P(\lambda)}\!\!\downarrow^{B_{m,1,n}}_{B_{m,p,n}} \cong \oplus_{0 \leq r <
 p/t}P(\lambda^r).$$ 
 
 For $\lambda
\in \Lambda(m,1,n)$, the projective module $\overline{P(\lambda)}$ appears as
quotient (in fact, a direct summand) of
$$B(\lambda)=B_{m,1,n}e_{n-2l} \otimes_{B_{m,1,n-2l}} \overline{\mathbf{S}(\lambda)}.$$
 We can therefore construct the projective modules as the eigenspaces of the automorphism $\sigma^t$ (by first extending  the $\sigma^t$-action to the module $B(\lambda)$ in the obvious way).
 
 \subsubsection{Restriction to the group algebra} We now calculate the structure of the projective, standard, and simple modules for $B_{m,p,n}$ upon restriction to $kG(m,p,n)$.

\begin{prop}\label{lr2}
Let $\lambda^r, \mu^q \in \Lambda(m,p,n)$, with
$\Stab_{\ZZ/p\ZZ}(\lambda)$ $=\langle \sigma^t\rangle$ and
$\Stab_{\ZZ/p\ZZ}(\mu)$ $=\langle \sigma^u\rangle$.  We have for a
simple, projective, or standard $B_{m,p,n}$-module $M(\lambda^r)$,
that
$$[M(\lambda^r)\!\!\downarrow_{kG(m,p,n)}:\mathbf{S}(\mu^q)]=\begin{cases}
%\sum_{\rho \in T}
% {\hcf(t,u)}
 \sum_{\rho \in T}
[\overline{M(\lambda)}:\overline{\mathbf{S}(\mu^\rho )}]	& \text{if } r = q \text{ modulo } \hcf(\tfrac{p}{t}, \tfrac{p}{u})
\\
0 &\text{otherwise}
\end{cases}$$
where $T$ is a set of cosets for $\langle \sigma^{\hcf(t,u)}\rangle\leq
\ZZ/p\ZZ$.
%  \langle \sigma^{ u}\rangle$.   
%%Therefore $\Hom_{B_{m,p,n}}(M(\lambda^r),$ $M(\mu^q))=0$ if $r \neq q$
%modulo $ \hcf(p/t, p/u)$. 
\end{prop}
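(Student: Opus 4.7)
The plan is to use the realisation $M(\lambda^r)=p_r\overline{M(\lambda)}$ as the $\xi^{dtr}$-eigenspace of $\sigma^t$ (Theorem \ref{3.3.1} for standards, and the analogous constructions for simples and projectives given earlier in Section 3). The multiplicity $[M(\lambda^r)\!\!\downarrow_{kG(m,p,n)}:\mathbf{S}(\mu^q)]$ equals the dimension of the $\xi^{dtr}$-eigenspace of $\sigma^t$ acting on the $\mathbf{S}(\mu^q)$-isotypic component of $\overline{M(\lambda)}\!\!\downarrow_{kG(m,p,n)}$, divided by $\dim\mathbf{S}(\mu^q)$, so the task reduces to analysing that eigenspace.

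First I would use semisimplicity of $kG(m,1,n)$ in characteristic zero to write $\overline{M(\lambda)}\!\!\downarrow_{kG(m,1,n)}=\bigoplus_\nu a_\nu\overline{\mathbf{S}(\nu)}$ with $a_\nu=[\overline{M(\lambda)}:\overline{\mathbf{S}(\nu)}]$. The simple $\mathbf{S}(\mu^q)$ appears in $\overline{M(\lambda)}\!\!\downarrow_{kG(m,p,n)}$ only through those $\nu$ in the $\ZZ/p\ZZ$-orbit of $\mu$. For $\nu=\sigma^i\mu$, the tableau-shift $\sigma^i$ gives a $kG(m,p,n)$-isomorphism $\overline{\mathbf{S}(\mu)}\to\overline{\mathbf{S}(\nu)}$ which commutes with $\sigma^u$ (by abelianness of $\ZZ/p\ZZ$), and so restricts to an isomorphism $\mathbf{S}(\mu^q)\xrightarrow{\sim}\mathbf{S}(\nu^q)$ preserving the index $q$. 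Hence the $\mathbf{S}(\mu^q)$-isotypic is $W=\bigoplus_\nu a_\nu\mathbf{S}(\nu^q)$, summed over the $u$ elements of the orbit.

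Next I would analyse the $\sigma^t$-action on $W$. Since $\sigma^t\in\Stab(\lambda)$, the multiplicities $a_\nu$ are $\sigma^t$-invariant, so $W$ splits as a direct sum of $\hcf(t,u)$ blocks indexed by the $\langle\sigma^t\rangle$-orbits on $\ZZ/p\ZZ\cdot\mu$, each of size $u/\hcf(t,u)$; the representatives $\rho$ for these blocks form a set $T$ of coset representatives for $\langle\sigma^{\hcf(t,u)}\rangle\leq\ZZ/p\ZZ$, matching the statement. On a single block $\sigma^t$ cyclically permutes the $u/\hcf(t,u)$ summands, and $(\sigma^t)^{u/\hcf(t,u)}=\sigma^{\lcm(t,u)}\in\Stab(\mu)$ acts on each $\mathbf{S}(\mu^q)$ as the scalar $\xi^{d\lcm(t,u)q}$ (since $\sigma^u$ acts by $\xi^{duq}$). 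Consequently the eigenvalues of $\sigma^t$ on the block are the $u/\hcf(t,u)$ values $\xi^{dtq+jm\hcf(t,u)/u}$ for $j=0,\dots,u/\hcf(t,u)-1$, each with multiplicity $a_{\sigma^\rho\mu}\dim\mathbf{S}(\mu^q)$.

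Finally, a short divisibility check using $m=pd$ and $\hcf(p/t,p/u)=p/\lcm(t,u)$ shows that $\xi^{dtr}$ appears among these eigenvalues exactly when $r\equiv q\pmod{\hcf(p/t,p/u)}$, in which case the matching $j$ is unique and the block contributes $a_{\sigma^\rho\mu}=[\overline{M(\lambda)}:\overline{\mathbf{S}(\mu^\rho)}]$ to the multiplicity; summing over $\rho\in T$ yields the formula. The main technical hurdle is the block analysis of $\sigma^t$, specifically verifying that $(\sigma^t)^{u/\hcf(t,u)}$ acts as that single explicit scalar on each $\mathbf{S}(\mu^q)$-summand (so that the cyclic-shift eigenvalue description applies cleanly); this works because the iteration lands in $\Stab(\mu)$, where the $\sigma^u$-eigenvalue description of $\mathbf{S}(\mu^q)$ pins down the scalar directly.
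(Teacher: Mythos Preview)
Your argument is correct and follows essentially the same strategy as the paper: both realise $M(\lambda^r)$ as the $\xi^{dtr}$-eigenspace of $\sigma^t$ on $\overline{M(\lambda)}$, restrict to the $\mathbf{S}(\mu^q)$-isotypic piece built from the $\ZZ/p\ZZ$-orbit of $\mu$, and then determine when $\xi^{dtr}$ occurs as an eigenvalue there. The only cosmetic difference is in the eigenvalue computation itself---the paper factorises $(\sigma^t-\xi^{dtr})=\prod_{0\le i<t}(\sigma-\xi^{d(r+ip/t)})$ and reads off the kernel from the linear factors, whereas you analyse $\sigma^t$ directly as a block-cyclic operator on each $\langle\sigma^t\rangle$-orbit and extract the eigenvalues from the scalar action of $(\sigma^t)^{u/\hcf(t,u)}=\sigma^{\lcm(t,u)}$; both routes land on the same congruence $r\equiv q\pmod{p/\lcm(t,u)}=\hcf(p/t,p/u)$.
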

\begin{proof}
The multiplicities $[\overline{M(\lambda)}:\overline{\mathbf{S}(\mu)}]$ are calculated in terms of Littlewood--Richardson coefficients in \cite[Appendix]{bcd}.  From this result, it is immediate that
$$[\overline{M(\lambda)}\!\!\downarrow_{G(m,1,n)}:\overline{\mathbf{S}(\mu)}]=[\overline{M(\lambda^\sigma)}\!\!\downarrow_{G(m,1,n)}:\overline{\mathbf{S}(\mu^\sigma)}].$$

We have that $\langle \sigma^{t}\rangle$ fixes $\overline{M(\lambda)}$ and
$\langle \sigma^{u}\rangle$ fixes $\overline{\mathbf{S}(\mu)}$.  Therefore 
$$[\overline{M(\lambda)}\!\!\downarrow_{G(m,1,n)}:\mathbf{S}(\mu)]
=[\overline{M(\lambda)}\!\!\downarrow_{G(m,1,n)}:\overline{\mathbf{S}(\mu^{ {\tau}})}] $$
for $\tau \in \langle\sigma^{\hcf(t,u)}\rangle$.
  Therefore, we want to calculate the (well-defined)
multiplicities 
$$[\overline{M(\lambda)}\!\!\downarrow_{kG(m,p,n)} : (\oplus_{\tau \in \langle\sigma^{\hcf(t,u)}\rangle }\overline{\mathbf{S}(\mu^{\tau})})\!\!\downarrow_{kG(m,p,n)} ].$$   

First, note that we can factorise the map $(\sigma^t - \xi^{dtr})$ as
the product
  $$(\sigma^{ t} - \xi^{dtr}) = \prod_{0\leq i < t}(\sigma 	- \xi^{d(r+ip/t)}).$$

Now, consider the kernel of the map $(\sigma^t-\xi^{dtr})$ applied to
the direct sum. We have that 
\begin{align*}
\ker(\sigma^{ t} - \xi^{dtr})(\oplus_{\tau \in \langle\sigma^{\hcf(t,u)}\rangle }\mathbf{S}(\mu^{\tau})) &= \bigoplus_{0\leq i < t}   \ker( \sigma 	- \xi^{d(r+ip/t)})(\oplus_{\tau \in \langle\sigma^{\hcf(t,u)}\rangle }\mathbf{S}(\mu^{\tau}))  \\
&= 
\bigoplus_{
\begin{subarray}
c  r= q \text{ mod} \\
\hcf( p/t ,  {p}/{u})
\end{subarray} 
}	  \mathbf{S}(\mu^q).
 \end{align*}
Summing over a set of coset representatives of  $(\ZZ/p\ZZ)/\langle \sigma^{\hcf(t,u)}\rangle$ we obtain the desired result.
%The multiplicities $[M(\lambda^r)\!\!\downarrow_{G(m,p,n)}:\mathbf{S}(\mu^q)]$  result by summing over a set of  coset representatives for $\langle \sigma^{u}\rangle/\langle \sigma^{\hcf(t,u)}\rangle$.
 \end{proof}

 \section{Homomorphisms between standard  and projective modules}

 Let $\lambda,\mu \in \Lambda(m,1,n)$ with
 $\Stab_{\ZZ/p\ZZ}(\lambda)=\langle\sigma^{t}\rangle$ and
 $\Stab_{\ZZ/p\ZZ}(\mu)=\langle\sigma^{u}\rangle$.  Let $0 \leq r <
 p/t$ and $0 \leq q < p/u$.  We let $\lambda^r,\mu^q \in
 \Lambda(m,p,n)$ denote the elements corresponding to the $r$th and
 $q$th orbits.
 
 \begin{lem}\label{above}
Consider the algebra $B_{m,1,n}$ with parameter $\delta\in k^m$ as in
Section \ref{deltapa}.  Let $\lambda,\mu \in \Lambda(m,1,n)$  Let $\overline{M(\lambda)}$ be a standard or projective
module labelled by $\lambda$.  Let $\overline{N(\mu)}$ be a simple, standard, or
projective module labelled by $\mu$.  We have that
$$\Hom_{B_{m,1,n}}(\overline{M(\lambda)},\overline{N(\mu)}) \cong
\Hom_{B_{m,1,n}}(\overline{M(\lambda^{\sigma})},\overline{N(\mu^{\sigma})}) 
 $$
%for $\rho \in \langle \sigma \rangle$.  
%Therefore,
%$$\oplus_{\rho \in \langle\sigma\rangle/\langle\sigma^u\rangle }\Hom_{B_{m,1,n}}(\overline{M(\lambda)},\overline{N(\mu^\rho)}) \cong
%\oplus_{\rho \in \langle \sigma\rangle / \langle \sigma^u,\sigma^t \rangle}  \Hom_{B_{m,1,n}}(\overline{M(\lambda)},\overline{N(\mu^{\rho})})^{u/\hcf(t,u)} 
% $$
 \end{lem}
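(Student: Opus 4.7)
The plan is to package the computation that appears just before the lemma as an auto-equivalence of $\Bone\text{-mod}$, and then read off the claimed Hom-space identification from functoriality.

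First, I would introduce the one-dimensional $\Bone$-module $\chi$ that factors through the quotient of $\Bone$ by $\B$ and sends the generator $t_1$ to $\xi^d$; this is exactly the linear character highlighted in Section~3.3 when one remarks that $\Bone/\B$ is cyclic generated by the image of $t_1$. The functor $F=\chi\otimes_k(-)$ is then an auto-equivalence of $\Bone\text{-mod}$, with quasi-inverse $\chi^{-1}\otimes_k(-)$, and in particular it is exact and fully faithful.

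Second, I would identify the action of $F$ on each family of labelled modules. For standards, the formula $\sigma(v\otimes\ft)=\xi^{dq}v\otimes\sigma(\ft)$ displayed just before the lemma is precisely the statement that $F(\overline{\Delta(\lambda)})\cong\overline{\Delta(\lambda^{\sigma})}$. For simples, $\overline{L(\lambda)}$ is the head of $\overline{\Delta(\lambda)}$, so exactness of $F$ (together with the previous identification) forces $F(\overline{L(\lambda)})$ to be the head of $\overline{\Delta(\lambda^{\sigma})}$, namely $\overline{L(\lambda^{\sigma})}$. For projectives, $\overline{P(\lambda)}$ is characterised as the projective cover of $\overline{L(\lambda)}$, and since $F$ preserves both projectivity (as an auto-equivalence) and heads, $F(\overline{P(\lambda)})$ is the projective cover of $\overline{L(\lambda^{\sigma})}$, i.e.\ $\overline{P(\lambda^{\sigma})}$.

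Finally, applying the fully-faithful functor $F$ to morphisms gives a natural bijection
\[
\Hom_{\Bone}(\overline{M(\lambda)},\overline{N(\mu)})\;\xrightarrow{\ \sim\ }\;\Hom_{\Bone}\bigl(F\overline{M(\lambda)},F\overline{N(\mu)}\bigr),
\]
and the identifications above rewrite the right-hand side as $\Hom_{\Bone}(\overline{M(\lambda^{\sigma})},\overline{N(\mu^{\sigma})})$, which is the content of the lemma.

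The only non-formal step is the identification $F(\overline{\Delta(\lambda)})\cong\overline{\Delta(\lambda^{\sigma})}$; once that is in hand, the simple and projective cases go through automatically by exactness and the universal property of heads and projective covers. Since that identification is already essentially done by the explicit intertwiner $\sigma$ constructed in Section~3.3, the main obstacle really amounts to verifying that $\chi$ is a well-defined $\Bone$-module, which is immediate from the fact that the required scalars satisfy the defining relations (this is what the equality $\chi\otimes\overline{\pi}_{\lambda}=\overline{\pi}_{\sigma(\lambda)}$ already encodes).
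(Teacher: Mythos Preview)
Your argument is correct and is genuinely different from the paper's. The paper does \emph{not} use the twist functor here: instead it observes that the special choice of $\delta$ forces the cyclotomic parameters to satisfy $\overline{\delta}_k=\overline{\delta}_{ip+k}$, and then invokes the un-oriented version of \cite[Corollary 5.5.2]{bcd}, which says that the Hom-spaces for $B_{m,1,n}$ depend on the labels only through the cyclotomic parameters attached to each component; rotating both $m$-partitions by $d$ places therefore leaves the Hom-space unchanged. This is why the paper needs the hypothesis that $\overline{M(\lambda)}$ has a $\Delta$-filtration (see the remark following the lemma): the cited result from \cite{bcd} is only stated for such modules. Your route, by contrast, packages the intertwiner $\sigma$ of Section~3.3 as an auto-equivalence and reads everything off from functoriality; it is self-contained, does not need \cite{bcd}, and would in fact also handle the case where $\overline{M(\lambda)}$ is simple.

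One small point of care: calling $\chi$ a ``one-dimensional $\Bone$-module that factors through $\Bone/\B$'' is not literally right, since $\B$ is a subalgebra rather than an ideal and $\Bone$ is not a Hopf algebra, so $\chi\otimes_k(-)$ is not a priori a functor on $\Bone$-mod. What actually makes your argument work is that $\Bone$ is $\ZZ/p\ZZ$-graded by total label sum (this uses the special $\delta$), and $\chi$ is a character of the grading group; equivalently, there is the explicit algebra automorphism of $\Bone$ multiplying each diagram by $\xi^{d\cdot(\text{label sum})}$, and your $F$ is pullback along it. The paper's own notation $\chi\otimes\overline{\pi}_\lambda=\overline{\pi}_{\sigma(\lambda)}$ is shorthand for exactly this, so once you phrase it that way the rest of your proof goes through unchanged.
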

 \begin{proof}
The condition on the parameter implies that
 $$\overline{\delta}_{k}= \overline{\delta}_{ip+k}$$ for all $i$ and
$0\leq k\leq p-1$.  Therefore, by the un-oriented version of
\cite[Corollary 5.5.2]{bcd} outlined in the Appendix, we have that rotating both partitions by
$ip$ places results in the required isomorphism.
 \end{proof}

\begin{rmk}Note that the case that $\overline{M(\lambda)}$ is simple is excluded, as we may only use \cite[Corollary 5.5.2]{bcd} for modules $\overline{M(\lambda)}$ with a $\Delta$-filtration. 
% The second isomorphism can easily be seen to follow by removing the
% `overcount'. 
 \end{rmk}
 \begin{lem}\label{above1} Let $M(\lambda^r)$ and $N(\mu^q)$ be  simple, standard, or
projective modules labelled by $\lambda^r, \mu^q\in\Lambda(m,p,n)$.
 We have the following isomorphism:
  \begin{align*} \Hom_{B_{m,p,n}}( M(\lambda^r),N(\mu^q)) 
&\cong %\oplus_{0\leq r \leq \hcf(p/t,p/u)}  
\Hom_{B_{m,p,n}}({M(\lambda^{r+1})} ,N(\mu^{q+1})) 
\end{align*}
 \end{lem}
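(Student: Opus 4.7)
The plan is to realise both sides as $\sigma^t$- and $\sigma^u$-eigenspaces inside the ambient $B_{m,1,n}$-modules $\overline{M(\lambda)}$ and $\overline{N(\mu)}$, and then to produce the required isomorphism by ``twisting by $t_1$''. The key observation is that left multiplication by $t_1$ shifts these eigenspaces by one, while conjugation by $t_1$ preserves the subalgebra $B_{m,p,n} \subset B_{m,1,n}$.

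First I would invoke Theorem \ref{3.3.1} together with its analogues for simple and projective modules to identify $M(\lambda^r) = \ker(\sigma^t - \xi^{dtr})\overline{M(\lambda)}$ and $N(\mu^q) = \ker(\sigma^u - \xi^{duq})\overline{N(\mu)}$. Using the commutation relation $\sigma^t \circ \rho_\lambda(t_1) = \xi^{dt}\rho_\lambda(t_1)\circ \sigma^t$ from the previous subsection, if $v \in M(\lambda^r)$ then $\sigma^t(t_1 v) = \xi^{dt(r+1)}(t_1 v)$, so left multiplication by $t_1$ (which is invertible in $B_{m,1,n}$, with inverse $t_1^{m-1}$) restricts to a $k$-linear bijection $M(\lambda^r) \to M(\lambda^{r+1})$, and likewise $N(\mu^q) \to N(\mu^{q+1})$.

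The candidate map on Hom-spaces is then $\phi \mapsto t_1 \circ \phi \circ t_1^{-1}$. To check that its image is $B_{m,p,n}$-linear, I would verify that conjugation by $t_1$ stabilises the subalgebra $B_{m,p,n}$: for any diagram $b \in B_{m,p,n}$, conjugation by $t_1$ alters only the labels of the two strands incident to node $1$ on the top and bottom, by $+1$ and $-1$ respectively, so the total label sum is unchanged modulo $m$ and in particular modulo $p$. Then, writing $w = t_1 u$ with $u \in M(\lambda^r)$,
\[ (t_1 \phi t_1^{-1})(b w) = t_1\,\phi\bigl((t_1^{-1}bt_1)\, u\bigr) = t_1\,(t_1^{-1}bt_1)\,\phi(u) = b \cdot (t_1 \phi t_1^{-1})(w), \]
where the middle equality uses $t_1^{-1}bt_1 \in B_{m,p,n}$ and the $B_{m,p,n}$-linearity of $\phi$. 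The explicit two-sided inverse $\psi \mapsto t_1^{-1}\psi t_1$ then produces the claimed isomorphism.

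There is no real obstacle: the argument is formal once one has the eigenspace description of the modules and the observation that conjugation by $t_1$ fixes $B_{m,p,n}$ setwise. The only caveat worth mentioning is that the indices $r{+}1$ and $q{+}1$ are to be read cyclically (modulo $p/t$ and $p/u$ respectively), which holds automatically because $t_1^m = 1$, so after $p/t$ shifts on $\overline{M(\lambda)}$ one returns to the $\xi^{dtr}$-eigenspace.
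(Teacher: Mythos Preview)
Your argument is correct and is essentially a fleshed-out version of the paper's one-line proof, which reads in its entirety: ``This follows by twisting both modules under conjugation by $t_1^d$.'' Both proofs exploit the same mechanism --- an inner automorphism of $B_{m,1,n}$ by a power of $t_1$ that preserves the subalgebra $B_{m,p,n}$ and shifts the $\sigma^t$-eigenspaces --- so the approaches coincide.

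The only point worth noting is the exponent: you use $t_1$ whereas the paper writes $t_1^d$. Your choice is the one that literally produces the shift $r\mapsto r+1$ via the relation $\sigma^t\circ\rho_\lambda(t_1)=\xi^{dt}\rho_\lambda(t_1)\circ\sigma^t$; conjugation by $t_1^d$ would instead shift $r$ by $d$ modulo $p/t$, which need not be a single step. Since the lemma is only used (in the proof of Theorem~\ref{BH}) to identify all Hom-spaces within a residue class, either version suffices there, but for the lemma exactly as stated your $t_1$ is the right element. Your verification that conjugation by $t_1$ preserves $B_{m,p,n}$ (label sum unchanged) and your explicit inverse $\psi\mapsto t_1^{-1}\psi t_1$ are both fine.
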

\begin{proof}
This follows by twisting both modules under   conjugation  by $t_1^d$.
\end{proof}
% \begin{proof}
%Consider the twisting action, $\tau$, given by conjugation by $t_1$.  Under this action we have isomorphisms:
%\begin{align*} \Hom_{B_{m,p,n}}( 	\Delta(\lambda^r),\Delta(\mu^q)) 
%&\cong \Hom_{B_{m,p,n}}({\Delta(\lambda^{r+1})} ,\Delta(\mu^{q+1})).
%\end{align*}
%Continued twisting leaves us to consider the equivalence classes under $\tau$.   Consider the subgroup $\langle \sigma^t,\sigma^u\rangle \leq \langle \sigma^t \rangle$.  There are $\hcf(p/t,p/u)$ cosets, each of size $\lcm(t,u) /t$.  %Summing over these, we are done.
%By Lemma \ref{lr2}, any module labelled by an element of a coset other than that  containing the elements congruent to $q$ modulo $\hcf(p/t,p/u)$ has no non-zero homomorphism into $\Delta(\mu^{q})$ and so we are done.
% \end{proof} 
 We let  $\epsilon_{r,q}$ denote the Kronecker delta of $r$ and $q$
 modulo $\hcf(p/t,p/u)$. 
 
 \begin{thm}\label{BH} Let $\lambda^r,\mu^q\in\Lambda(m,p,n)$.  Let $M(\lambda^r)$ be a standard or projective
module labelled by $\lambda^r$.  Let $N(\mu^q)$ be a simple, standard, or
projective module labelled by $\mu^q$. We have isomorphisms
\begin{align*}\Hom_{B_{m,p,n}}(M(\lambda^r),N(\mu^q)) &\cong \epsilon_{r,q}	\Hom_{B_{m,1,n}}(\overline{M(\lambda)},\oplus_{\rho \in (\ZZ/p\ZZ)/  		  \langle \sigma^u,\sigma^t \rangle					}
 \overline{N(\mu^\rho)} )  \\
& \cong \epsilon_{r,q}\Hom_{B_{m,1,n}}(\oplus_{\rho \in (\ZZ/p\ZZ)/  		  \langle \sigma^u,\sigma^t \rangle					}
\overline{M(\lambda^\rho)},\overline{N(\mu )} )  
.\end{align*}
  \end{thm}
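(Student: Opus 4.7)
The plan is to combine Frobenius reciprocity for the skew group algebra $B_{m,1,n} = B_{m,p,n} \rtimes \ZZ/p\ZZ$ with the two rotational symmetries given by Lemmas \ref{above} and \ref{above1}. I would first note that since $B_{m,1,n}$ is free of rank $p$ over $B_{m,p,n}$, Frobenius reciprocity gives
$$\Hom_{B_{m,1,n}}(V, W) \cong \Hom_{B_{m,p,n}}(V\!\!\downarrow, W\!\!\downarrow)^{\ZZ/p\ZZ}$$
for any $B_{m,1,n}$-modules $V$ and $W$, with $\sigma = t_1^d$ acting on the right-hand Hom space by conjugation $(\sigma\cdot f)(v) = \sigma f(\sigma^{-1} v)$. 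Applying this with $V = \overline{M(\lambda)}$ and $W = \bigoplus_\rho \overline{N(\mu^\rho)}$ (over cosets of $\langle \sigma^u,\sigma^t\rangle$ in $\ZZ/p\ZZ$), and expanding both restrictions via the Clifford decomposition of Theorem \ref{3.3.1}, expresses the right-hand side of the theorem as the $\ZZ/p\ZZ$-fixed subspace of a triply-indexed direct sum of $B_{m,p,n}$-Hom spaces of the form $\Hom_{B_{m,p,n}}(M(\lambda^{r'}), N(\mu^{\rho\cdot q'}))$.

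I would then analyse the $\ZZ/p\ZZ$-action on this sum. The generator $\sigma$ permutes the Clifford eigenspaces $M(\lambda^{r'})$ and $N(\mu^{q'})$ each by a cyclic shift of their index; by Lemma \ref{above1} these shifts realise canonical isomorphisms between successive Hom summands, and Lemma \ref{above} plays the analogous role across the outer cosets. Orbits of $(r',q')$-pairs under the diagonal shift are classified by the residue $(r'-q')\bmod \hcf(p/t, p/u)$, and the $\ZZ/p\ZZ$-invariants pick out one isomorphic copy per orbit.

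Tracking the two orbit accounts yields both the $\epsilon_{r,q}$ factor and the correct multiplicity. A given pair $(r,q)$ contributes to the invariants only when $r \equiv q \pmod{\hcf(p/t, p/u)}$, which is precisely the $\epsilon_{r,q}$ condition; for such pairs, the $\hcf(t,u) = [\ZZ/p\ZZ : \langle\sigma^u, \sigma^t\rangle]$ cosets in the outer sum exactly compensate the orbit multiplicity so that the invariant subspace collapses to a single copy of $\Hom_{B_{m,p,n}}(M(\lambda^r), N(\mu^q))$, giving the first isomorphism. The second follows by running the same argument with the roles of $\lambda$ and $\mu$ interchanged, using Lemma \ref{above} in the opposite direction (valid since the hypothesis that $\overline{M(\lambda)}$ has a $\Delta$-filtration allows us to invoke it).

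The main obstacle will be the careful bookkeeping of the two distinct $\sigma$-actions in play: the ``Clifford'' $\sigma$ from \S\ref{cliff} that defines the eigenspaces $M(\lambda^r)$ (and commutes with the $B_{m,p,n}$-action on standards) versus the ``multiplicative'' $\sigma = t_1^d$ used in Frobenius (which shuffles those eigenspaces). Correctly correlating these two actions, and verifying that the orbit count of Lemma \ref{above1} combines with the coset count $\hcf(t,u)$ of the outer sum to yield precisely one copy of the desired Hom space on the right-hand side, is the main combinatorial subtlety of the proof.
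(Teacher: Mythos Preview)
Your approach differs from the paper's and, as written, has a genuine gap. The paper does \emph{not} use the invariants formula $\Hom_{B_{m,1,n}}(V,W)\cong\Hom_{B_{m,p,n}}(V\!\!\downarrow,W\!\!\downarrow)^{\ZZ/p\ZZ}$; instead it applies the restriction--induction adjunction $\Hom_{B_{m,p,n}}(\overline{M(\lambda)}\!\!\downarrow, N(\mu^q))\cong\Hom_{B_{m,1,n}}(\overline{M(\lambda)}, N(\mu^q)\!\!\uparrow)$ to the sum $\oplus_{0\le i<p/t}\Hom_{B_{m,p,n}}(M(\lambda^i),N(\mu^q))$, then uses Lemma~\ref{above} on the right to collapse the $(\ZZ/p\ZZ)/\langle\sigma^u\rangle$-sum to a $(\ZZ/p\ZZ)/\langle\sigma^u,\sigma^t\rangle$-sum with multiplicity $u/\hcf(t,u)$, and uses Lemma~\ref{above1} on the left to see that the surviving summands are all isomorphic, giving the same multiplicity. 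The two multiplicities cancel and the result follows.

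The step you are missing is the vanishing when $r\not\equiv q\pmod{\hcf(p/t,p/u)}$. In the paper this is \emph{not} a consequence of the Hom-space manipulation at all: it is imported from Proposition~\ref{lr2}, which computes composition multiplicities after restriction to $kG(m,p,n)$ via Littlewood--Richardson and shows they vanish off the correct residue class; hence any $B_{m,p,n}$-homomorphism with $r\not\equiv q$ must already vanish as a $G(m,p,n)$-map. Your orbit analysis cannot recover this. The invariants of the $\ZZ/p\ZZ$-permutation action on $\bigoplus_{r',q'}\Hom_{B_{m,p,n}}(M(\lambda^{r'}),N(\mu^{q'}))$ pick out one copy from \emph{every} orbit, so what you obtain is that the right-hand side of the theorem equals $\sum_{c\bmod\hcf(p/t,p/u)}\Hom_{B_{m,p,n}}(M(\lambda^{r_c}),N(\mu^{q_c}))$ (one representative per residue class $c=r_c-q_c$), not a single summand. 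Lemma~\ref{above1} tells you the summands within one orbit are isomorphic, but says nothing across orbits, so there is no mechanism in your argument forcing the $c\neq 0$ terms to vanish. Inserting Proposition~\ref{lr2} at this point would close the gap, and then your invariants computation and the paper's adjunction computation become two routes to the same multiplicity comparison.
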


\begin{proof}
 We first focus on the righthand side.  By Clifford theory, we have that  
\begin{align*}
\Hom_{B_{m,p,n}}(\oplus_{0\leq i < p/t}	M(\lambda^i),N(\mu^q)) 
&\cong\Hom_{B_{m,p,n}}(\overline{M(\lambda)}\!\!\downarrow,N(\mu^q))\\
&\cong\Hom_{B_{m,1,n}}(\overline{M(\lambda)} ,N(\mu^q)\!\!\uparrow)\\
&\cong\Hom_{B_{m,1,n}}(\overline{M(\lambda)},
\oplus_{\rho \in (\ZZ/p\ZZ)/\langle\sigma^u\rangle}\overline{N(\mu^\rho)} ), \\
&\cong
\oplus_{\rho \in (\ZZ/p\ZZ)	  / \langle\sigma^u\rangle				}	 
\Hom_{B_{m,1,n}}(\overline{M(\lambda)},
\overline{N(\mu^\rho)} ).
%%%%%%
\end{align*}
Therefore by Lemma \ref{above}, we have that
\begin{align*}
\Hom_{B_{m,p,n}}(\oplus_{0\leq i < p/t}	M(\lambda^i),N(\mu^q)) 
&\cong \!\!\! 
\bigoplus_{\begin{subarray}c \rho \in (\ZZ/p\ZZ)/  \\		  \langle \sigma^u,\sigma^t \rangle			\end{subarray}		}
\!\!\!\ 
  \Hom_{B_{m,1,n}}(\overline{M(\lambda)},  
  \overline{N(\mu^{\rho})})^{u/\hcf(t,u)} .
\end{align*}

We now focus on the lefthand side. Any $B_{m,p,n}$-homomorphism must
restrict to a $G({m,p,n})$-homomorphism, therefore 
%Therefore we have that
\begin{align*}
\Hom_{B_{m,p,n}}(\oplus_{0\leq i < p/t}	M(\lambda^i),N(\mu^q)) &\cong  
\Hom_{B_{m,p,n}}(\oplus_{
\begin{subarray}
l  r= q \text{ mod} \\
\hcf( p/t ,  {p}/{u})
\end{subarray} 
}\!\!\!	  {M(\lambda^r)} ,N(\mu^q))	%^{\lcm(t,u) /t}.  			 
\end{align*}
as all the other hom-spaces are zero, by Proposition \ref{lr2}.  By
repeated application of Lemma \ref{above1}, we get that  all the
summands on the righthand side are isomorphic, and so 
\begin{align*}
\Hom_{B_{m,p,n}}(\oplus_{0\leq i < p/t}	M(\lambda^i),N(\mu^q)) &\cong  
\Hom_{B_{m,p,n}}(   {M(\lambda^r)} ,N(\mu^q)) ^{u/\hcf(t,u)},
\end{align*}
therefore the results follows.
 \end{proof}
 
 \section{Decomposition numbers for   $B_{m,p,n}$}
 We now use Theorem \ref{BH} and Brauer--Humphrey's reciprocity to
 calculate the decomposition numbers for the Brauer algebras of type
 $G(m,p,n)$.  For $m,p,n\in\NN$, we let
 $$d_{\lambda^r,\mu^q}^{m,p,n}(\delta)=[\Delta_n(\lambda^r):L_n(\mu^q)]$$ denote
 the multiplicity of $L_n(\mu^q)$ in $\Delta_n(\lambda^r)$ as a
 $B_{m,p,n}$-module. By Brauer--Humphrey's
 reciprocity $$d_{\lambda^r,\mu^q}^{m,p,n}(\delta)=\dim_k(\Hom
 (P_n(\mu^q),\Delta_n(\lambda^r) ).$$ In \cite{marbrauer, cdv}, the
 decomposition numbers, $d_{\lambda,\mu}^{1,1,n}$, for the classical
 Brauer algebra (i.e. the type $G(1,1,n)$ case) are given by the
 corresponding parabolic Kazhdan--Lusztig polynomials of type
 $(D_n,A_{n-1})$.
 
The type $G(m,1,n)$ is covered in \cite{bcd}.  In \cite[Appendix]{bcd}
it is shown that the decomposition numbers for the un-oriented
cyclotomic Brauer algebras are as follows:
   $$d_{\lambda,\mu}^{m,1,n}(\delta)=\prod_{0\leq i < m}d_{\lambda_i,\mu_i}^{1,1,n}(\overline{\delta_i}). %\ldots d_{\lambda_m,\mu_m}^{1,1,n}(\overline{\delta_m})
   $$

By Theorem \ref{BH}, Brauer--Humphrey's reciprocity, and the above, we
have the following description of the decomposition numbers of Brauer
algebras of type $G(m,p,n)$.
\begin{thm}
The decomposition numbers, $d^{m,p,n}_{\lambda^r,\mu^q}(\delta)$ for
$B_{m,p,n}$ over a field of characteristic zero are as follows:
$$d^{m,p,n}_{\lambda^r,\mu^q}(\delta)	 = 	\epsilon_{r,q} %	\!\!\!	
\sum_{\begin{subarray}c \rho \in (\ZZ/p\ZZ)/  \\		  \langle \sigma^u,\sigma^t \rangle			\end{subarray}		}
\!\!\!\ 
%_{\begin{subarray}c \rho \in \langle\sigma\rangle   		/  \langle \sigma^{\hcf{(u,t)} }\rangle\end{subarray}	}
d^{m,1,n}_{\lambda,\mu^\rho}(\delta).
	$$
%	where the sum is over a set of coset representatives of $( \ZZ/p\ZZ)   		/  \langle \sigma^{\hcf{(u,t)} }\rangle$.
\end{thm}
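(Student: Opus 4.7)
The plan is to combine Brauer--Humphreys reciprocity with Theorem \ref{BH} and then apply reciprocity once more on the $B_{m,1,n}$ side. First I would use the reciprocity identity displayed just before the theorem statement to write
$$d^{m,p,n}_{\lambda^r,\mu^q}(\delta) = \dim_k \Hom_{B_{m,p,n}}(P_n(\mu^q),\Delta_n(\lambda^r)).$$
Since $P_n(\mu^q)$ is projective and $\Delta_n(\lambda^r)$ is standard, both arguments fall within the scope of Theorem \ref{BH}. I would then apply the \emph{second} isomorphism in Theorem \ref{BH} (with the roles of $\lambda$ and $\mu$, and of $r$ and $q$, interchanged relative to the notation there), which gives
$$\Hom_{B_{m,p,n}}(P_n(\mu^q),\Delta_n(\lambda^r)) \cong \epsilon_{r,q}\,\Hom_{B_{m,1,n}}\!\Bigl( \bigoplus_{\rho \in (\ZZ/p\ZZ)/\langle\sigma^u,\sigma^t\rangle} \overline{P(\mu^\rho)},\, \overline{\Delta(\lambda)}\Bigr).$$
The choice of the second (rather than the first) isomorphism in Theorem \ref{BH} is what makes the rotations land on $\mu$, which is precisely what the right-hand side of the statement requires.

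Next I would distribute the $\Hom$-functor over the finite direct sum and invoke Brauer--Humphreys reciprocity for the algebra $B_{m,1,n}$, which is available since $B_{m,1,n}$ is quasi-hereditary (Theorem \ref{A2'}): for each coset representative $\rho$,
$$\dim_k \Hom_{B_{m,1,n}}(\overline{P(\mu^\rho)},\overline{\Delta(\lambda)}) = [\overline{\Delta(\lambda)}:\overline{L(\mu^\rho)}] = d^{m,1,n}_{\lambda,\mu^\rho}(\delta).$$
Summing over $\rho$ and pulling the factor $\epsilon_{r,q}$ through yields the stated identity.

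There is no genuine obstacle beyond index bookkeeping; the real work has already been absorbed into Theorem \ref{BH} (via Clifford theory and the rotation symmetry of Lemma \ref{above}) and into the reciprocity of the underlying towers. The only points requiring care are that the Kronecker factor $\epsilon_{r,q}$ is symmetric in $r$ and $q$ modulo $\hcf(p/t,p/u)$ (so the ambiguity in applying Theorem \ref{BH} in either direction is harmless), and that one uses the second rather than the first isomorphism in Theorem \ref{BH} so that the orbit $(\ZZ/p\ZZ)/\langle\sigma^u,\sigma^t\rangle$ acts on the label $\mu$ of the Hom-target, thereby producing the sum $\sum_\rho d^{m,1,n}_{\lambda,\mu^\rho}(\delta)$ that appears in the claim.
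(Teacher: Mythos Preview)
Your proposal is correct and is exactly the argument the paper has in mind: the paper's proof is the single sentence ``By Theorem \ref{BH}, Brauer--Humphrey's reciprocity, and the above,'' which you have unpacked precisely. Your observation that one should use the \emph{second} isomorphism of Theorem \ref{BH} (so that the rotations act on the projective label $\mu$) and that $\epsilon_{r,q}$ is symmetric is the right bookkeeping to land on the stated formula.
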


\begin{rmk}
In \cite[Remark 5.5.3]{bcd} it is noted that one can reduce the calculation of certain higher extension groups for $B(m,1,n)$ to the case of the classical Brauer algebra, as we did above for the decomposition numbers.  In these cases one can calculate the corresponding higher extension groups for $B_{m,p,n}$ in a similar fashion to the above.
\end{rmk}

 \begin{Acknowledgements*}
 We would like to thank Maud De Visscher for %valuable input at the beginning of this project.
 help identifying the family of Brauer algebras studied in this paper.
 We would also like to thank Jean Michel and Shona Yu for helpful discussions
 concerning complex reflection groups,   %and the organisers of
 as well as the Centre International de Rencontres Math\'ematiques and the organisers of   `Lie theory and quantum analogues'   for   providing support and a stimulating environment during the  conference, where this project began.  The first author   is grateful for  financial support received from the ANR grant ANR-10-BLAN-0110.
 \end{Acknowledgements*}
 % 
% \begin{cor}
% For $\lambda^r,\mu^q\in\Lambda(m,p,n)$, and let $M(\lambda^r)$ and $N(\mu^q)$ be standard or projective modules labelled by $\lambda^r$, $\mu^q$.  we have isomorphisms
%\begin{align*}\Hom_{B_{m,p,n}}(M(\lambda^r),N(\mu^q)) &\cong \epsilon_{r,q}	\Hom_{B_{m,1,n}}(\overline{M(\lambda)},\oplus_
%{\rho=\rho_1\rho2, \rho_1 \in (\ZZ/p\ZZ)/  (\ZZ/p\ZZ)	, \rho_2\in	 (\ZZ/p\ZZ) \langle \sigma^u,\sigma^t \rangle}
% \overline{N(\mu^\rho)} )  
%.\end{align*}

% \end{cor}
 
%\begin{cor}
%The decomposition numbers for $B_{m,p,n}$ are given by the cyclic Kazhdan--Lusztig polynomials.  More precisely:
%$$[\Delta(\lambda^r):L(\mu^q)]=\epsilon_{r,q}
%d^p_{\lambda,\sigma}.$$
%\end{cor}
% \begin{proof}
% In the type $(1,1,n)$ case this follows from \cite{cdm}.  In type $(m,1,n)$ this follows from \cite{bcd}.  Now the type $(m,p,n)$ case follows from the above result. 
% \end{proof}
%
%
%
%
%
%

\bibliographystyle{amsalpha} \bibliography{books}

 \end{document}